\newtheorem{theorem}{Theorem}
\newtheorem{lemma}{Lemma}
\begin{document}
\setlength{\oddsidemargin}{-.5in}
\setlength{\evensidemargin}{-.5in}

\title{Classification of Book Representations of $K_6$}
\author{Dana Rowland}
\date{March 13, 2017}

\maketitle

\abstract{A book representation of a graph is a particular way of embedding a graph in three dimensional space so that the vertices lie on a circle and the edges are chords on disjoint topological disks.  We describe a set of operations on book representations that preserves ambient isotopy, and apply these operations to $K_6$, the complete graph with six vertices. We prove there are exactly 59 distinct book representations for $K_6$, and we identify the number and type of knotted and linked cycles in each representation.  We show that book representations of $K_6$ contain between one and seven links, and up to nine knotted cycles.  Furthermore, all links and cycles in a book representation of $K_6$ have crossing number at most four.}

\section{Introduction}

Take any six points in three-dimensional space and connect each pair of points with distinct simple curves.  The result is a spatial embedding of the complete graph $K_6$.  The points are the vertices of the graph and the curves joining the points are the edges.   Cycles in a spatial embedding form (possibly trivial) knots in $\mathbb{R}^3$ and disjoint pairs of 3-cycles form links.  A well-known result of Conway and Gordon establishes that every spatial embedding of $K_6$ must contain at least one non-trivial link \cite{CG}.

One way to connect the six vertices is to use line segments.  In this case, the embedding is called a linear embedding.  In a linear embedding, each cycle is a polygonal representation of a knot.  The polygonal index or stick number of a knot, $s(K)$, is defined to be the least number of line segments that can be used to form a knot.  Since the trefoil knot is the only non-trivial knot with $s(K)\leq 6$, all cycles of a linear embedding of $K_6$ must either be trivial or trefoil knots.  In particular, Huh and Jeon proved that a linear embedding of $K_6$ contains either exactly one Hopf link and no knotted cycles, or exactly three Hopf links and one trefoil knot \cite{hj}.

Note that in a projection of a linear embedding, no edge can intersect itself and no edge intersects an adjacent edge.  We consider a more general collection of spatial embeddings that preserve this property, known as book representations.  A book representation is a spatial embedding of a graph $G$ so that the vertices are on a line (the ``spine'' of the book) and edges are semicircles on disjoint half-planes (the ``pages'') meeting at the spine.

An equivalent description is obtained by bending the spine to form a circle; then the conditions for a book representation become:
\begin{enumerate}
\item The vertices of $G$ lie on a circle $C$ in a plane.
\item The edges of $G$ are distributed among disjoint \emph{sheets}, topological disks with boundary $C$.
\item The projection of edges of $G$ to the plane containing $C$ are chords of $C$.

\end{enumerate}
Book representations of graphs have applications to layout design of multi-layer circuit boards; a chip designer needs to avoid wires crossing within the same layer, just as edges cannot cross within the same sheet.  See for example \cite{chung}.

Figures \ref{4sheets} and \ref{4sheetK6} provide an example of how separate sheets are combined to form a book representation of $K_6$ with 4 sheets.
\begin{figure}[h]
\centering
\includegraphics[width = 0.9\linewidth]{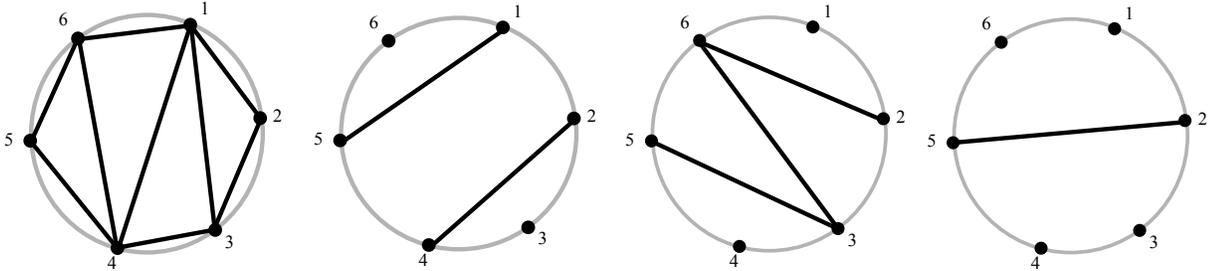}
\caption{Each edge of $K_6$ appears in exactly one of the four disks.}
\label{4sheets}
\end{figure}

\begin{figure}[h]
  \begin{minipage}[b]{0.50\linewidth}
    \centering
    \hfill
    \includegraphics[width = 0.4\linewidth]{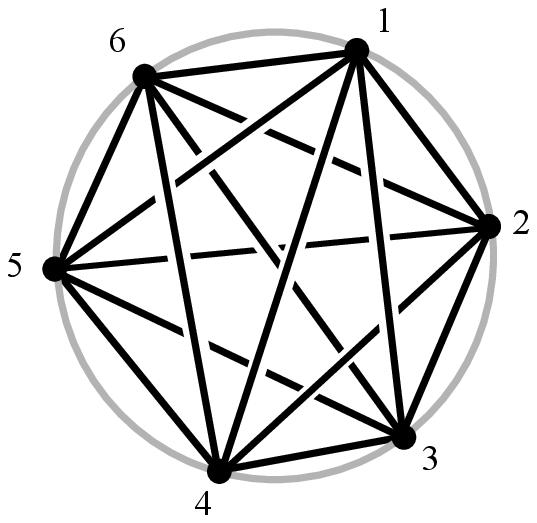}
    \par\vspace{0pt}
  \end{minipage}%
  \begin{minipage}[b]{0.50\linewidth}
    \centering%
    \begin{tabular}{|c|c|}
        \hline
        Sheet 1 & 13,14,46\tabularnewline
        \hline
        Sheet 2 & 15,24\tabularnewline
        \hline
        Sheet 3 & 26,35,36\tabularnewline
        \hline
        Sheet 4 & 25\tabularnewline
        \hline
        \end{tabular}
    \par\vspace{15pt}
  \end{minipage}
\caption{A book representation of $K_6$ is obtained by stacking the disks and identifying their boundaries.}
\label{4sheetK6}
\end{figure}

A cycle in a book representation is an arc presentation of a knot.  See \cite{cromwell} for an overview of the arc presentation.  The arc index $a(K)$ of a knot is the minimum number of edges in any arc presentation of the knot; $a(3_1)=5$, $a(4_1)=6$, and $a(K) > 6$ for all other non-trivial knots.  Since a cycle in $K_6$ contains at most 6 edges, the arc index implies each cycle in a book representation of $K_6$ is either trivial, a trefoil, or a figure-eight knot.  Furthermore, each pair of disjoint 3-cycles in a book representation of $K_6$ forms either a trivial link, a Hopf link ($2_1^2$), or a Solomon link ($4_1^2$).

The minimal sheet book representation of $K_6$ contains exactly one Hopf link and no knotted cycles.  We prove that a book representation of $K_6$ can contain up to seven non-trivial two component links and up to nine non-trivial knotted cycles.  A list of all 59 book representations and the knotted cycles and links that appear in each appears in section \ref{appendix}.

The outline of this paper is as follows.  In Section \ref{sec:equiv}, we introduce a set of moves from one book representation to another that will not change the ambient isotopy type of an embedding.  In Section \ref{sec:3sheets} we review Otsuki's canonical book representation of $K_6$ and show that every representation with 3 sheets is ambient isotopic to this one.  In Sections \ref{sec:4sheets} through \ref{sec:moresheets}, we describe how to find an additional 58 distinct book representations with between 4 and 9 sheets, and we prove they are distinct. The primary technique involves enumerating all ways to partition interior edges to a given number of sheets, determining equivalences using the moves from \ref{sec:equiv}, and then showing the remaining representations are distinct by considering the set of knotted and linked cycles.

\section{Equivalence of Book Representations}
\label{sec:equiv}

Recall that $K_n$, the complete graph on $n$ vertices, is a graph where each vertex has exactly 1 edge connecting it to each of the other $n$-1 vertices.  To describe a book representation of $K_n$, we label the vertices from $1$ to $n$ traveling clockwise around the boundary circle, and let edge $ab$ denote the edge connecting vertex $a$ to vertex $b$, where $a<b$.  We define the \emph{length} of edge $ab$ to be the distance between $a$ and $b$ along the boundary circle.  The formula is given by

\[ \mathrm{length}(ab) = \mathrm{min}[(b-a), (a-b) \mathrm{mod}\ n].    \]

We call edges of length 1 \emph{exterior} edges.  Note that these edges never intersect any other edges when projected to the plane containing the boundary circle.  Edges of length 2 or more are called \emph{interior} edges. A book representation is defined by assigning a sheet to each interior edge in such a way that edges in the same sheet do not intersect in the projection plane.  The sheet number of a book representation is the minimum number of sheets required, where the minimum is taken over all ambient isotopic book representations. For $K_6$, the minimum sheet number is 3. For $K_n$, the minimum possible sheet number is given by the formula $\lceil {n/2} \rceil$ \cite{bernhart,kobayashi}. Otsuki's canonical book representation of $K_n$ attains this minimum sheet number \cite{otsuki}. The canonical book representation of $K_6$ is given in Section \ref{sec:3sheets}.

On the other extreme, the maximal possible sheet number is equal to the number of interior edges.  In total, $K_n$ has $\dfrac{n(n-1)}{2}$ edges.  The number of interior edges is then given by the formula

	\[\frac{n(n-1)}{2} - n = \frac{n(n-3)}{2}\]

Since the complete graph $K_6$ has 15 edges total--6 exterior edges and 9 interior edges--the maximal possible sheet number 9.

A particular sheet of a book representation of $K_6$ can contain at most three interior edges.  Table \ref{tbl:edges} shows the possible pairs and triples of interior edge assignments that can occur within a sheet.  Note that in $K_6$, an edge of length three projects to a diameter of the boundary circle.  Every pair of edges of length three intersect; therefore, each sheet can contain at most one edge of length three.  We will refer to these edges--14, 25 and 36--as \emph{long} edges, and the remaining interior edges will be \emph{short} edges.
\begin{table}
\centering
  \begin{tabular}{|c|c|c|c|c|} \hline
 \multicolumn{3}{|c|}{\textbf{Two edges}} & \multicolumn{2}{|c|}{\textbf{Three edges}}\\ \hline
  13, 14   &  14, 46 & 25, 26   &13, 14, 15 & 15, 24, 25\\
  13, 15   &  15, 24 & 25, 35   &13, 14, 46 & 15, 25, 35\\
  13, 35   &  15, 25 & 26, 35   &13, 15, 35 & 24, 25, 26\\
  13, 36   &  15, 35 & 26, 36   &13, 35, 36 & 24, 26, 46\\
  13, 46   &  24, 25 & 26, 46   &13, 36, 46 & 25, 26, 35\\
  14, 15   &  24, 26 & 35, 36   &14, 15, 24 & 26, 35, 36\\
  14, 24   &  24, 46 & 36, 46   &14, 24, 46 & 26, 36, 46\\

    \hline
    \end{tabular}
  \caption{Possible two or three edge sheets in a book representation of $K_6$}\label{sheets}
  \label{tbl:edges}
\end{table}

Any way of partitioning the interior edges of $K_6$ into sheets of non-intersecting edges will produce a book representation; however, many of these partitions will describe the same spatial embedding up to ambient isotopy.  To classify all book representations, we must determine when two edge assignments represent equivalent book representations.  In \cite{cromwell}, Cromwell establishes a set of operations on arc presentations of knots so that any two arc presentations of the same knot are related by a finite sequence of operations. The following theorem describes an analogous set of operations which can be performed on book representations without changing the ambient isotopy type of the spatial graph.

\begin{theorem} \label{thm:moves} The following moves result in ambient isotopies for book representations of $K_n$ in $S^3$:
\begin{enumerate}
\item Rotating vertices:  Increase or decrease all vertex labels by one, mod $n$.  \label{rotate}
\item Shifting sheets:  Let $s$ be the number of sheets.  Increase or decrease all sheet numbers by one, mod $s$.  \label{shift}
\item Edge move:  If an edge does not intersect any of the edges in an adjacent sheet, then move the edge into that sheet.  \label{bump}
\item Vertex exchange:  If $v$ and $w$ are consecutively labeled vertices on the boundary circle, and if the sheet number of each edge incident to $v$ is higher than the sheet number of any edge incident to $w$, then exchange the positions of vertices $v$ and $w$ and then relabel.  \label{vexchange}
\item Sheet insertion/deletion:  Add or remove an empty sheet between any two sheets.  \label{addsheet}
\item Double reflection:  Compose any pair of reflections through a diameter of the boundary circle or through the plane containing the boundary circle. \label{mirror2}
\end{enumerate}
\end{theorem}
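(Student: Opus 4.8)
The plan is to model the ambient $S^3$ as an open book whose binding is the unknotted circle $C$ and whose pages are the disks bounded by $C$. Writing $S^3=\{(z_1,z_2)\in\mathbb{C}^2:|z_1|^2+|z_2|^2=1\}$, I take $C=\{z_2=0\}$ and let the page $P_\theta=\{\arg z_2=\theta\}$ carry the sheet at angular position $\theta$, so that a book representation is literally an embedded graph whose vertices lie on $C$ and whose edges are arcs on finitely many pages. Since sliding vertices along $C$ without passing one another, and likewise rescaling the page angles, are isotopies that change neither the cyclic vertex order nor the edge-to-sheet assignment, I may assume the vertices are equally spaced in $\arg z_1$ and the occupied pages are equally spaced in $\arg z_2$. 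In this model each of the six moves will be realized either by a rigid motion of $S^3$ lying in the identity component of its isometry group, or by an explicit isotopy supported near a single page or a single adjacent pair of vertices; in every case I check that the motion carries the given book representation to the one prescribed by the move, and that no two edges of a common sheet are made to cross, so that the output is again legitimate.

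Moves \ref{rotate}, \ref{shift}, and \ref{mirror2} are the rigid moves. The rotation $(z_1,z_2)\mapsto(e^{i\alpha}z_1,z_2)$ fixes every page setwise and rotates $C$, so $\alpha=2\pi/n$ slides every vertex one slot and realizes move \ref{rotate}; the rotation $(z_1,z_2)\mapsto(z_1,e^{i\beta}z_2)$ fixes $C$ pointwise and cyclically permutes the pages, realizing move \ref{shift}. Both lie in $U(2)\subset SO(4)$, which is connected, so each is ambient isotopic to the identity and hence carries the starting embedding to an ambient isotopic one. For move \ref{mirror2}, a reflection through a diameter, $(z_1,z_2)\mapsto(\bar z_1,z_2)$, and a reflection through the plane of $C$, $(z_1,z_2)\mapsto(z_1,\bar z_2)$, each preserve the open-book structure (binding to binding, pages to pages) but are orientation-reversing; their composition lies in $SO(4)$, so it is ambient isotopic to the identity while still taking a book representation to a book representation. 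Move \ref{addsheet} is immediate: an empty page carries no edge, so inserting or deleting one leaves the embedded graph unchanged and only reindexes the pages.

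For the edge move (move \ref{bump}), let $e$ lie on page $P_i$ and let $P_j$ be adjacent, so no page lies strictly between them and the solid wedge $W$ bounded by $P_i$ and $P_j$ has empty interior. I sweep $e$ across $W$ by continuously increasing its page-angle from $\theta_i$ to $\theta_j$ while holding its two endpoints fixed on $C$. Because the interior of $W$ contains no edges, the only possible collision is at the instant $e$ lands on $P_j$, and the hypothesis that $e$ meets no edge of $P_j$ in the projection is exactly what prevents the relevant chords from crossing there; hence the sweep is an ambient isotopy whose result is the valid book representation with $e$ reassigned to sheet $j$.

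The vertex exchange (move \ref{vexchange}) is the step I expect to be the main obstacle, since it moves an entire bundle of edges past another bundle at once. Here $v$ and $w$ are adjacent on $C$, and by hypothesis every edge at $v$ lies on a strictly higher page than every edge at $w$. I slide $v$ along $C$ past $w$ inside a small ball $B$ meeting $C$ in the short arc around the two vertices, carrying the incident edges along and holding their far endpoints fixed. The key point is that throughout the slide the edges from $v$ occupy page-angles uniformly greater than those of the edges from $w$; this page-separation keeps the two bundles disjoint in $S^3$ precisely while their chords cross in the projection, so no genuine intersection is created. After the slide, relabeling restores the clockwise vertex order and yields exactly the prescribed book representation. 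The technical care required is to write the local isotopy so that the angular ordering is respected uniformly across all edges of each bundle, and so that the fixed far endpoints induce only a reparametrization of the edge arcs outside $B$; granting this, the exchange is realized by an ambient isotopy, completing the proof.
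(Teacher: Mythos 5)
Your proof is correct, but it takes a genuinely different route from the paper's. The paper's proof is brief: it observes that the edge move and sheet insertion/deletion do not change the projection, that rotating vertices and the double reflection are rotations of $S^3$, and then handles the two hard moves --- shifting sheets and vertex exchange --- by invoking generalized Reidemeister moves for spatial graphs, with the vertex exchange illustrated only by an example in a figure (a sequence of Reidemeister IV moves passing one vertex under the edge bundle of the other). You instead set up the open book decomposition of $S^3$ with binding $C$ and pages $\{\arg z_2 = \theta\}$, which lets you realize rotating vertices, shifting sheets, and the double reflection all as elements of the connected group $U(2)\subset SO(4)$, hence isotopic to the identity. The payoff is most visible for the sheet shift: where the paper must physically drag the top sheet's edges to the bottom through a sequence of Reidemeister moves, you exhibit the shift as a rigid rotation of the page circle, which makes it transparent why the linear order of sheets only matters cyclically. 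Your sweep argument for the edge move is also somewhat more careful than the paper's ``does not change the projection'' remark, since it explains why no collision occurs during the motion rather than only comparing endpoints. For the vertex exchange, your page-separation argument (the two edge bundles stay at disjoint ranges of page angle while their chords cross in projection) is the right mechanism and is at essentially the same level of detail as the paper's figure-plus-citation; in both treatments one is implicitly using the easy combinatorial fact that the only crossing changes caused by the exchange are between edges incident to $v$ and edges incident to $w$, which the hypothesis places in different sheets, so the result is again a valid book representation. Neither that check nor the handling of the exterior edge $vw$ is spelled out in either argument, but these are minor omissions common to both.
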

\begin{proof}
Consider a projection of a book representation to the plane containing the boundary circle $C$.  Since moves \ref{bump} and \ref{addsheet} do not change the projection, they are ambient isotopies.  Increasing or decreasing vertex labels is equivalent to rotating the graph $2\pi/n$ radians around the center of the boundary circle.  The composition of two reflections in Move \ref{mirror2} also results in a rotation in $S^3$, so Moves \ref{rotate} and \ref{mirror2} are ambient isotopies.   Move \ref{shift}, shifting sheets, requires all edges in the top sheet to be moved to the bottom sheet (or vice versa), and an example of vertex exchange, move \ref{vexchange}, is shown in Figure \ref{fig:vexchange}.  Both of these can be achieved using the generalized Reidemeister moves for spatial graphs shown in Figure \ref{Reid}; see \cite{kauffman} for a discussion of generalized Reidemeister moves.
\end{proof}

\begin{figure}
\label{Reid}
\centering
\includegraphics{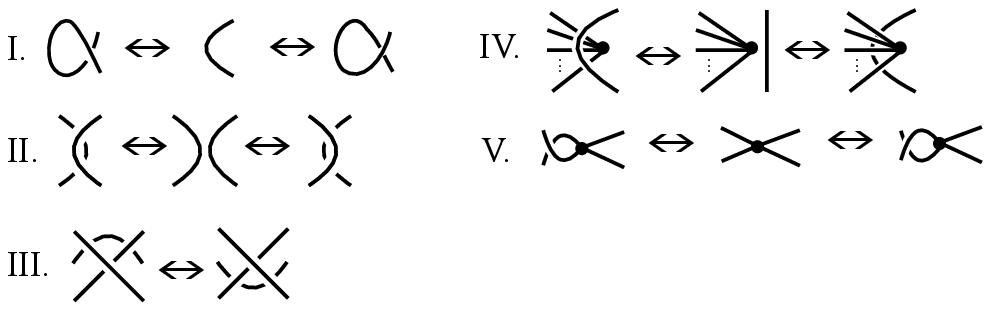}
\caption{The generalized Reidemeister moves}
\end{figure}

\begin{figure}\label{fig:vexchange}
\begin{minipage}[b]{0.50\linewidth}
    \centering
\includegraphics[width = \linewidth]{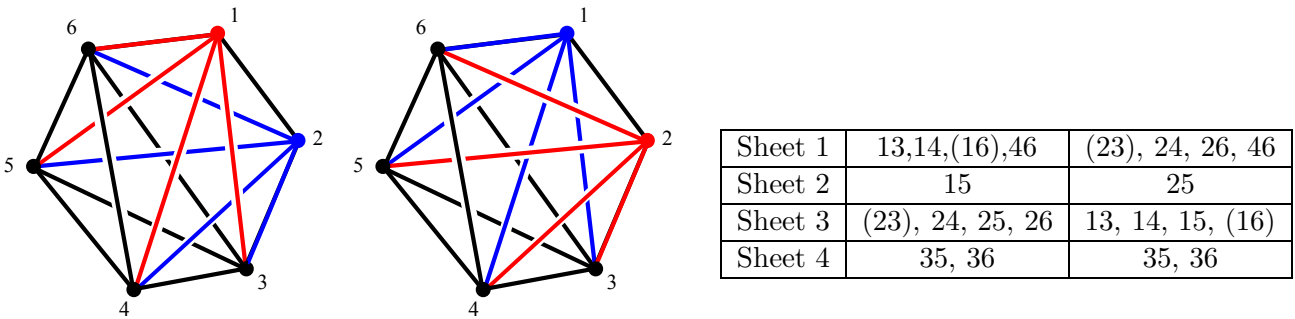}
\par\vspace{0pt}
  \end{minipage}%
  \begin{minipage}[b]{0.50\linewidth}
    \centering%
    \begin{tabular}{|c|c|c|}
        \hline
        Sheet 1 & 13,14,(16),46 & (23), 24, 26, 46\tabularnewline
        \hline
        Sheet 2 & 15 & 25\tabularnewline
        \hline
        Sheet 3 & (23), 24, 25, 26 & 13, 14, 15, (16)\tabularnewline
        \hline
        Sheet 4 & 35, 36 & 35, 36 \tabularnewline
        \hline
        \end{tabular}
    \par\vspace{15pt}
  \end{minipage}
\caption{Vertex exchange of adjacent vertices is possible through a sequence of Reidemeister IV moves--in this example, the blue vertex passes beneath all the edges incident to the red vertex.}
\end{figure}

In later sections, we use the equivalences in Theorem \ref{thm:moves} to show that every book representation of $K_6$ is equivalent to one of the representations shown in Section \ref{appendix}.

\section{$K_6$ with Minimum Sheet Number}
\label{sec:3sheets}
In this section, we prove there is a unique 3-sheet book representation of $K_6$.

\begin{theorem}
Every 3-sheet book representation of $K_6$ is equivalent to the following:

\[\begin{tabular}{|c|c|}
\hline
Sheet 1 & 13, 14, 46\tabularnewline
\hline
Sheet 2 & 15, 24, 25\tabularnewline
\hline
Sheet 3 & 26, 35, 36\tabularnewline
\hline

\end{tabular}\]
\end{theorem}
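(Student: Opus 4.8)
The plan is to first pin down the coarse structure of any 3-sheet representation by a counting argument, then enumerate the finitely many edge-partitions that survive, and finally use the moves of Theorem~\ref{thm:moves} to collapse these to the single representation in the statement.

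First I would argue that every sheet must contain exactly three interior edges, exactly one of which is long. There are nine interior edges and only three sheets, and a sheet holds at most three interior edges, so each sheet holds exactly three. The three long edges $14,25,36$ are pairwise crossing diameters, so no sheet can contain two of them; with three long edges and three sheets this forces exactly one long edge per sheet. Hence each sheet consists of one long edge together with two short edges, and by Table~\ref{tbl:edges} this triple must be one of the admissible three-edge sheets (in particular the two purely short triples $\{13,15,35\}$ and $\{24,26,46\}$ are excluded, since they contain no long edge).

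Next I would recast the remaining freedom as a matching problem: assign to each long edge an admissible pair of short edges so that the six short edges $\{13,15,24,26,35,46\}$ are partitioned into three pairs. Reading off Table~\ref{tbl:edges}, the long edge $14$ may be paired only with one of $\{13,15\},\{13,46\},\{15,24\},\{24,46\}$, and similarly for $25$ and $36$. The rotation move (move~\ref{rotate}) sends $14\to 25\to 36\to 14$ and cyclically permutes the admissible pairs, so I can cut down the number of cases by fixing the partner of $14$ up to this symmetry. A short case check then shows exactly two partitions of the short edges are consistent: the canonical one $\{13,14,46\},\{15,24,25\},\{26,35,36\}$ and the second partition $\{14,15,24\},\{25,26,35\},\{13,36,46\}$.

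Finally I would show these two partitions give the same book representation. Reflecting through the diameter of $C$ through vertices $1$ and $4$ sends the edge sets of the first partition to those of the second; composing this with the reflection through the plane of $C$ yields a genuine ambient isotopy by move~\ref{mirror2}, after which the three sheets can be brought into the desired arrangement using sheet shifts (move~\ref{shift}) and rotations (move~\ref{rotate}). The main obstacle, and the step demanding the most care, is tracking the sheet \emph{ordering} rather than just the unordered edge assignment: the reflection through the plane reverses the cyclic order of the sheets, and a bare reversal is not itself an allowed move, so I must verify that the reversed-and-reflected sheet sequence can be rotated back into agreement with the canonical one and that no orientation obstruction to this matching survives. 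Checking this compatibility of cyclic sheet orders for both partitions is what completes the proof that the 3-sheet representation is unique.
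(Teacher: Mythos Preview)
Your enumeration of the two admissible partitions is correct, and your use of the double reflection (diameter through $1,4$ composed with the plane of $C$) does relate the first partition to the second. The gap is in the very step you flag as ``the main obstacle'': showing that the two cyclic orderings of a \emph{single} partition are equivalent. Concretely, the partition $\{13,14,46\},\{15,24,25\},\{26,35,36\}$ yields two genuinely different ordered representations (the paper's Options~1 and~2, obtained by swapping sheets~2 and~3), and the toolkit you propose---rotations (move~\ref{rotate}), sheet shifts (move~\ref{shift}), and double reflections (move~\ref{mirror2})---cannot collapse them. To see why, note that these moves generate only the index-two subgroup of $D_6\times D_3$ on which the parity ``(vertex reflection)$+$(sheet reversal)'' is even; one checks that no reflection of the hexagon preserves this particular partition, so every element of this subgroup sends Option~1 either to a cyclic shift of itself or to the \emph{reversed} cyclic ordering of the second partition (Option~4), never to Option~2. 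Thus moves~\ref{rotate}, \ref{shift}, \ref{mirror2} partition the four options into two orbits $\{1,4\}$ and $\{2,3\}$, and your proposed check would fail.

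The paper bridges this gap by an explicit nine-step sequence that leaves the three-sheet regime entirely: insert an empty fourth sheet (move~\ref{addsheet}), redistribute edges (move~\ref{bump}), and then perform three vertex exchanges (move~\ref{vexchange}) before collapsing back to three sheets. Vertex exchange is the missing ingredient in your plan, and the equivalence of Option~1 with Option~2 is in fact the substantive content of the theorem---the rest is the straightforward enumeration you have already carried out.
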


\begin{proof}
In a minimal book representation of $K_6$, each sheet must contain one long edge (of length three) and two short edges (of length two).  By shifting the sheets, we can assume that edge $14$ is in the top sheet.   Refer the three-edge sheets which contain edge 14 shown in Table \ref{tbl:edges}.  There are two cases to consider:  either edge $13$ or edge $24$ is also in sheet 1.  If edge $13$ is in sheet 1, then edge $26$ must be in the same sheet as edge $36$ since, as shown in Table \ref{tbl:edges}, all the potential sheets with three edges that contain 36 and not 13 contain 26. Similarly, edge $15$ must be in the same sheet as edge $25$.  There is a unique way to complete the sheets with the remaining edges to obtain three non-intersecting edges per sheet.  Similarly, if edge $24$ is in sheet 1, then edge $35$ is in the same sheet as edge $25$ and edge $13$ must be in the same sheet as edge $36$.

That means the only four possible options are:
\[
\begin{tabular}{|c|c|c|c|c|}
\hline
& \textbf{Option 1} & \textbf{Option 2} & \textbf{Option 3} & \textbf{Option 4}\tabularnewline
\hline
\hline
Sheet 1 & 13, 14, 46 &  13, 14, 46 &  14, 15, 24 &  14, 15, 24\tabularnewline
\hline
Sheet 2 &  15, 24, 25 &  26, 35, 36 &  13, 36, 46 &  25, 26, 35\tabularnewline
\hline
Sheet 3 &  26, 35, 36 &  15, 24, 25 &  25, 26, 35 &  13, 36, 46\tabularnewline
\hline
\end{tabular}
\]
Options 1 and 2 are Otsuki's left and right canonical book representations \cite{otsuki}. Note that Option 3 is a mirror image of Option 1, obtained by reflecting across the edge $14$, so that vertices $2$ and $6$ and vertices $3$ and $5$ are exchanged.  Since Option 2 is also a mirror image of Option 1 (obtained by reversing the order of sheets) we know that Options 2 and 3 must be equivalent.  Similarly, Options 1 and 4 are equivalent.

Finally, we show that Option 1 and Option 2 are also equivalent using
the following steps. Starting with option 1:
\begin{enumerate}
\item Insert an empty sheet 4.  (Move \ref{addsheet})
\item Move edges 13, 35 and 36 to sheet 4. (Move \ref{bump})
\item Note that the sheet number of all vertices adjacent to vertex 2 is higher than the sheet number of all vertices adjacent to vertex 3.  Swap vertex 2 and vertex 3.  (Move \ref{vexchange})
\item Move edge 24 to sheet 1.  (Move \ref{bump}).
\item Swap vertex 3 and vertex 4.  (Move \ref{vexchange})
\item Swap vertex 2 and vertex 3.  (Move \ref{vexchange})
\item Move edges 13 and 14 into sheet 3, move edges 24 and 25 into sheet 2, and move edge 26 to sheet 4. (Move \ref{bump})
\item Delete sheet 1. (Move \ref{addsheet})
\item Shift sheets, so \{13, 14,46\} becomes the top sheet.  (Move \ref{shift})
\end{enumerate}
These moves are shown in the table below.  Note that exterior edges can always be freely moved to any sheet; the exterior edges are only included below in parentheses when they are adjacent to a vertex used in a vertex exchange.
\[\begin{tabular}{|c|c|c|c|}
\hline
  Add Sheet & Move edges & Vertex exchange & Move edge \\
\hline
13, 14, 46 &  14, 46  & 14, 46 & 14, 24, (45), 46 \\
15, 24, 25 &  (12), 15, 24, 25 & 13, 15, (34), 35 & 13, 15, 35 \\
26, 35, 36 &  26            & 36 & (23), 36 \\
            & 13, (34), 35, 36 & (12), 24, 25, 26 & 25, 26 \\
\hline
Vertex exchange & Vertex exchange & Move edges & Delete and Shift sheets\tabularnewline
\hline
13, (23), (34), 35, 36 & (12), 24, 25, 26 & &13, 14, 46\tabularnewline
14, 15, (45) & 14, 15 & 15, 24, 25 & 26, 35, 36\tabularnewline
24, 46 & (34),46 & 13, 14, 46 & 15, 24, 25\tabularnewline
(12), 25, 26 & 13, 35, 36  & 26, 35, 36 & \tabularnewline
\hline
\end{tabular}\]
This shows that there is only one minimal sheet book representation of $K_6$, which is achiral (equivalent to its mirror image).
\end{proof}
Note that this embedding of $K_6$ contains no knotted cycles, and exactly one link--the cycles 135 and 246 form a Hopf link.

\section{Generating $4$-sheet Book Representations of $K_6$}
\label{sec:4sheets}
Next, we describe our method for generating book representations for complete graphs with $6$ vertices on $4$ sheets.  After generating all possible book representations, we then can use the methods from Section \ref{sec:equiv} to determine which of the representations are equivalent.

We begin by considering the possible partitions of edges to the four sheets.

\begin{lemma}
\label{lem:shtdist}
Every 4-sheet book representation of $K_6$ can be described using one sheet containing three edges, and three sheets containing exactly two edges.
\end{lemma}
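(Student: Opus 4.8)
The plan is to prove the lemma as a combinatorial statement about how the nine interior edges are distributed among the sheets, and then to eliminate the single bad distribution using the moves of Theorem~\ref{thm:moves}. Recall that $K_6$ has nine interior edges and that any sheet holds at most three pairwise non-crossing interior edges. In a representation whose sheet number is exactly $4$, no sheet can be empty, since an empty sheet is removable by Move~\ref{addsheet} and would exhibit the representation on three sheets. Hence the nine edges are partitioned into four blocks of sizes in $\{1,2,3\}$, and the only partitions of $9$ of this shape are $(3,3,2,1)$ and $(3,2,2,2)$. The latter is exactly the conclusion, so it suffices to show that no sheet-number-$4$ representation realizes $(3,3,2,1)$.

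To rule out $(3,3,2,1)$ I would show that every such configuration is ambient isotopic to a representation on only three sheets, contradicting the hypothesis. The target is the lone edge $e$ in the single-edge sheet. If $e$ crosses none of the edges in one of its two cyclically adjacent sheets, then Move~\ref{bump} pushes $e$ into that sheet; the vacated sheet is then empty and is deleted by Move~\ref{addsheet}, leaving three sheets. So the entire reduction comes down to freeing the lone edge.

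The main obstacle is the \emph{trapped} case, in which $e$ crosses an edge in each adjacent sheet and so cannot be moved directly; for instance $24$ can be blocked by $13$ on one side and $35$ on the other. To handle these I would first normalize using Moves~\ref{rotate} and~\ref{mirror2}: up to rotation and reflection the lone edge is either long (one of $14,25,36$, all equivalent) or short, which reduces the problem to a few cases. In each case Table~\ref{tbl:edges} limits the two three-edge sheets to finitely many possibilities, and I would then apply a vertex exchange (Move~\ref{vexchange})---the same device used in the three-sheet theorem---to pull apart the obstructing crossing and free $e$, after which the edge move and sheet deletion complete the reduction. The real work, and the point most likely to require care, is verifying that a qualifying vertex exchange always exists, so that no $(3,3,2,1)$ representation is genuinely stuck on four sheets.
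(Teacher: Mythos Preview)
Your reduction to the two unordered partitions $(3,3,2,1)$ and $(3,2,2,2)$ is fine, but the strategy you propose for eliminating $(3,3,2,1)$ is aimed at the wrong target and cannot succeed. You set out to prove that every $(3,3,2,1)$ arrangement reduces to three sheets, and hence cannot occur for a representation of sheet number four. That statement is false. Starting from the representation $4s1$ (Sheet $1=\{13,14,46\}$, Sheet $2=\{15,24\}$, Sheet $3=\{26,36\}$, Sheet $4=\{25,35\}$), move $26$ from sheet $3$ to sheet $4$ (it crosses neither $25$ nor $35$) and then shift sheets once; you obtain
\[
\text{Sheet }1=\{25,26,35\},\quad \text{Sheet }2=\{13,14,46\},\quad \text{Sheet }3=\{15,24\},\quad \text{Sheet }4=\{36\},
\]
a bona fide $(3,3,2,1)$ arrangement of an embedding whose sheet number is four (it contains a trefoil). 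Here the lone edge $36$ is trapped exactly as you describe---it crosses $14$ in sheet $1$ and both of $15,24$ in sheet $3$---and no vertex exchange will ever free it in the sense you want, because the underlying spatial graph is genuinely not a three-sheet embedding. So the ``real work'' you flag at the end is not merely delicate; it is impossible.

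The paper's argument differs in a small but essential way: instead of trying to move the lone edge \emph{out} of its sheet to empty it, one moves an edge \emph{into} that sheet to convert $(3,3,2,1)$ to $(3,2,2,2)$. The single edge meets at most four other interior edges while five edges sit in its two cyclically neighbouring sheets, so some neighbouring edge can always be pushed in. The only residual case is when the lone edge hits all three edges of the adjacent three-edge sheet and also an edge of the adjacent two-edge sheet; that forces the lone edge to be long, and a short direct check (using Table~\ref{tbl:edges}) shows the remaining three-edge sheet then cannot be filled. No vertex exchange is needed anywhere.
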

\begin{proof}
Each sheet must contain between one and three edges, and nine edges total must be assigned to the four sheets.  It follows that at least one sheet has exactly three edges.  By shifting sheets, we can ensure that sheet 1 contains three edges, and that a sheet with three edges is not on the bottom.  This leads to one of the following configurations for distributing interior edges:
  \[ (3,2,2,2), (3,3,2,1), (3,3,1,2), (3,2,3,1)  \]

We know any edge in a projection of a book representation of $K_6$ intersects at most four other edges.  Suppose that a configuration has a sheet with a single edge.  We see above that at least five edges are contained in adjacent sheets, so it is always possible to move a non-intersecting edge into the sheet with a single edge.  Thus, by moving edges if necessary, we can assume that the distribution of interior edges is either $(3,2,2,2)$ or $(3,3,2,1)$.

Now consider the edge distribution $(3,3,2,1)$.  If an edge in sheet 4 does not intersect all of the edges in sheet 1, then an edge may be moved from sheet 1 to obtain a $(3,2,2,2)$ configuration.  Suppose the edge in sheet 4 does intersect all the edges in sheet 1.  Since it must also intersect an edge in sheet 3, it must be a long (length 3) edge. By rotating vertices, we can assume that edge $25$ is in sheet 4, which implies that edges $13, 14, 36$ and $46$ are in sheets 1 and 3.  Table \ref{tbl:edges} lists all possible ways to assign three edges to a sheet.  Note that it is impossible to assign three of the unused edges to sheet 2.  Therefore, every 4-sheet representation can be represented with three edges in sheet 1 and two edges in each of the remaining sheets.
\end{proof}

Next, we prove that we only need to consider two possibilities for the top sheet.
\begin{lemma}
\label{lem:topsheet}
Every 4-sheet book representation of $K_6$ can be expressed so that the set of edges in sheet 1 is either $\{13, 14, 46\}$ or $\{13, 14, 15\}$.
\end{lemma}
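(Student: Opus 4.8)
By Lemma~\ref{lem:shtdist}, I may assume the edge distribution is $(3,2,2,2)$, so sheet~1 contains exactly three interior edges. The plan is to enumerate, up to the equivalences of Theorem~\ref{thm:moves}, all possible three-edge sheets from Table~\ref{tbl:edges} and show each is equivalent to one containing $\{13,14,46\}$ or $\{13,14,15\}$. The natural tool is the rotation move (Move~\ref{rotate}), which cyclically permutes vertex labels mod~$6$: this partitions the fourteen three-edge sheets of Table~\ref{tbl:edges} into rotation orbits. I would first compute how each of the three-edge sheets transforms under the rotation $i \mapsto i+1 \pmod 6$, tracking carefully that edge labels must be rewritten in the convention $a<b$ and that length is rotation-invariant.

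\textbf{Organizing the case analysis by edge lengths.}

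Each three-edge sheet has a \emph{length signature} (the multiset of lengths of its edges), and since rotation and the reflections in Move~\ref{mirror2} preserve edge lengths, I would sort the fourteen admissible triples by signature. The two target sheets have signatures $\{1,2,3\}$ wait---more carefully, $\{13,14,46\}$ has lengths $\{2,3,2\}$ and $\{13,14,15\}$ has lengths $\{2,3,2\}$ as well, since $14$ is long (length~$3$) and the other two are short (length~$2$). In fact every admissible three-edge sheet in Table~\ref{tbl:edges} contains exactly one long edge (an earlier remark shows no sheet holds two length-$3$ edges) together with two short edges, so the common signature is $\{2,3,2\}$ wait that's wrong too since some triples like $\{13,14,15\}$ I should double-check: all listed triples do indeed have this form. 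I would then use rotation to normalize the long edge to be $14$, reducing to sheets of the form $\{14, xy, zw\}$ with $xy,zw$ short. The remaining freedom is the reflection across edge~$14$ (Move~\ref{mirror2}), which fixes $14$ and swaps $2\leftrightarrow6$, $3\leftrightarrow5$.

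\textbf{Completing the reduction and the expected obstacle.}

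After normalizing the long edge to $14$, the short edges available that do not cross $14$ and do not cross each other are limited; consulting Table~\ref{tbl:edges}, the triples containing $14$ are exactly $\{13,14,15\}$, $\{13,14,46\}$, and $\{14,15,24\}$, $\{14,24,46\}$. I would then check that $\{14,15,24\}$ and $\{14,24,46\}$ are each carried to one of the two target sheets by the reflection across $14$: reflecting $\{14,15,24\}$ sends $15\mapsto 35$, $24\mapsto 26$ wait, I must verify these land on an admissible target; more directly, reflection should map $\{14,24,46\}$ to $\{14,15,26\}$-type images and I would match these against $\{13,14,46\}$ and $\{13,14,15\}$ up to a further rotation. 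The main obstacle I anticipate is bookkeeping: correctly computing the image of each short edge under both rotation and reflection, rewriting labels into the $a<b$ convention each time, and ensuring that no spurious crossings are introduced so that the resulting triple is genuinely admissible. Once every one of the admissible $14$-containing triples is shown to reduce to one of the two named sheets, the lemma follows, since any three-edge sheet can first be rotated to contain $14$ as its long edge.
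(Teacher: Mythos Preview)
Your overall strategy---normalize the long edge in the three-edge sheet to $14$ by rotation, then use reflection/rotation to reduce the four resulting triples to two---is exactly the paper's approach. However, there is a genuine gap at the point where you assert that ``every admissible three-edge sheet in Table~\ref{tbl:edges} contains exactly one long edge.'' This is false: the triples $\{13,15,35\}$ and $\{24,26,46\}$ consist entirely of short edges. Consequently your plan to ``use rotation to normalize the long edge to be $14$'' cannot begin until you rule out this case. The paper does so with an extra argument you have not supplied: if sheet~1 contains only short edges, then the three long edges must be distributed one to each of sheets~2, 3, 4; in particular sheet~2 holds one long and one short edge. Such a pair cannot intersect all three short edges of sheet~1 (check this against Table~\ref{tbl:edges}), so Move~\ref{bump} lets you push an edge from sheet~1 into sheet~2, giving a $(2,3,2,2)$ distribution. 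After a sheet shift (Move~\ref{shift}) the new top sheet contains a long edge, and your normalization can proceed.

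Two smaller points also deserve care. First, the reflection across edge~$14$ swaps $2\leftrightarrow 6$ and $3\leftrightarrow 5$, hence swaps $24\leftrightarrow 46$: it \emph{fixes} the triple $\{14,24,46\}$, so it will not reduce that case for you. The paper instead handles $\{14,24,46\}$ by the rotation $i\mapsto i+3$, which sends it directly to $\{13,14,15\}$. Second, recall that Move~\ref{mirror2} is a \emph{double} reflection: a single reflection across a diameter reverses orientation in $S^3$ and is not an isotopy. The paper composes the diameter reflection with the reflection through the plane of the boundary circle (which reverses the sheet order) and then shifts sheets to restore the three-edge sheet to the top; you should do the same when reducing $\{14,15,24\}$ to $\{13,14,46\}$.
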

\begin{proof}
By Lemma \ref{lem:shtdist}, we can assume the top sheet contains three edges, and all other sheets contain two edges.  If sheet 1 contained all short edges (of length two), then sheet 2 would contain one long edge and one short edge.  This pair of edges cannot intersect all three of the short edges in sheet 1, so some edge from sheet 1 could be moved into sheet 2 without causing an intersection. Therefore, we can assume without loss of generality that one of the long edges is in a sheet with three edges.  By shifting sheets we can ensure that sheet 1 has three edges, and by rotating vertices we can ensure that one of them is the edge 14.  That leaves four possibilities for sheet 1; it could contain edges $\{13, 14, 15\}$, $\{13, 14, 46\}$, $\{14, 15, 24\}$, or $\{14, 24, 46\}$.  If a representation has $\{14, 15, 24\}$ as its first sheet, perform a double reflection:  reflect through the plane containing the boundary circle to reverse the order of the sheets, then reflect across edge 14 to obtain the sheet $\{13, 14, 46\}$.  Finally, shift sheets so this sheet returns to the top.  If a representation has $\{14, 24, 46\}$ as its top sheet, rotate the vertices by adding 3 to each label to obtain $\{13, 14, 15\}$.  Thus, every representation is equivalent to one that has either $\{13, 14, 46\}$ or $\{13, 14, 15\}$ on top, as claimed.
\end{proof}

By using Lemmas \ref{lem:shtdist} and \ref{lem:topsheet} together with the equivalences established in Section \ref{sec:equiv}, we prove the following theorem:
\begin{theorem}
Every 4-sheet book representation of $K_6$ is equivalent to either
\[\begin{tabular}{|c|c|}
\hline
Sheet 1 & 13, 14, 46\tabularnewline
\hline
Sheet 2 & 15, 24\tabularnewline
\hline
Sheet 3 & 26, 36\tabularnewline
\hline
Sheet 4 & 25, 35\tabularnewline
\hline
\end{tabular}
\]
or its mirror image.
\end{theorem}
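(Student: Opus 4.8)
The plan is to combine Lemmas \ref{lem:shtdist} and \ref{lem:topsheet} to cut down to finitely many explicit configurations, enumerate them, and then collapse all of them onto the displayed representation (or its mirror) using the moves of Theorem \ref{thm:moves}. By Lemma \ref{lem:shtdist} the edge distribution is $(3,2,2,2)$, so sheet $1$ holds three interior edges while sheets $2,3,4$ each hold a compatible pair. By Lemma \ref{lem:topsheet} the top sheet is either $\{13,14,46\}$ or $\{13,14,15\}$, so there are two cases to organize the argument around.

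In each case I would first enumerate the ways to split the six remaining interior edges into three sheets of two mutually non-crossing edges. Reading the admissible pairs off Table \ref{tbl:edges}, this is a perfect-matching count on the six remaining edges; a direct check gives exactly four such partitions when sheet $1=\{13,14,46\}$ and exactly five when sheet $1=\{13,14,15\}$. The displayed representation is one of the four Case-1 partitions, so what remains is to show that every other partition, in every admissible ordering of its three two-edge sheets, is equivalent to it or to its mirror.

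The ordering of the two-edge sheets cannot be ignored, since two adjacent sheets may be interchanged by an isotopy only when no edge of one crosses an edge of the other, and in the target one checks that every adjacent pair of sheets does cross. I would reduce the orderings using the structural symmetries first---rotation of vertices (Move \ref{rotate}), cyclic shifting (Move \ref{shift}), and the order-reversing double reflections (Move \ref{mirror2})---which already identify many partitions and orderings with one another; for instance rotating all labels by $3$ fixes the top sheet $\{13,14,46\}$ and carries the displayed partition onto another of the four Case-1 partitions. For the configurations not matched by a symmetry, and in particular to pass from the top sheet $\{13,14,15\}$ down to $\{13,14,46\}$, I would exhibit an explicit sequence of edge moves and vertex exchanges, inserting and later deleting an auxiliary empty sheet as needed, exactly in the style of the nine-step reduction used for the three-sheet representation.

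The hard part will be this last step: there are many residual orderings to process, and each requires finding a correct sequence of Moves \ref{bump}, \ref{vexchange}, and \ref{addsheet} that realizes the rearrangement without ever creating an illegal self-crossing sheet. The delicate bookkeeping---tracking which exterior edges must accompany each vertex exchange, and certifying that the list of partitions and orderings is complete---is where the argument is most error-prone, so I would structure it by first reducing every Case-2 configuration to a Case-1 configuration and then reducing all four Case-1 partitions to the single displayed representation or its mirror. As a consistency check I would compute the set of knotted and linked cycles for each representative and confirm that this invariant is unchanged across all of the reductions, which both guards against a mistaken move sequence and foreshadows the distinctness arguments used for the higher-sheet representations.
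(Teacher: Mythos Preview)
Your proposal is essentially the paper's own proof: reduce to the $(3,2,2,2)$ distribution via Lemma~\ref{lem:shtdist}, fix the top sheet via Lemma~\ref{lem:topsheet}, enumerate the four (respectively five) partitions of the remaining six edges together with their $3!$ orderings, and then collapse everything using the moves of Theorem~\ref{thm:moves}, passing from the $\{13,14,15\}$ case to the $\{13,14,46\}$ case first. One small correction to your plan: not every one of the $24+30$ configurations lands on the displayed representation or its mirror---some of them reduce to three sheets (and hence are not $4$-sheet representations at all), so your reduction step should allow for that outcome as well. You should also make explicit, as the paper does, that the displayed representation and its mirror are genuinely distinct and genuinely have sheet number four; this follows from the trefoil in the Hamiltonian cycle $136425$, which both separates the two mirrors (left versus right trefoil) and distinguishes them from the unique $3$-sheet representation, which contains no knotted cycle.
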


\begin{proof}
By Lemma \ref{lem:topsheet}, we only need to consider two choices for the first sheet.  If the top sheet contains edges 13, 14, and 46, then there are four possible ways to partition the remaining six edges into three disjoint pairs, each of which can be ordered in $3!=6$ ways, resulting in 24 potential book representations.  It is straightforward to verify, using the moves in Section \ref{sec:equiv}, that all of these representations can either be reduced to three sheets, or are equivalent to one of the following:
\[\begin{tabular}{|c|c|c|}
\hline
Sheet 1 & 13, 14, 46 & 13, 14, 46\tabularnewline
\hline
Sheet 2 & 15, 24 & 25, 35\tabularnewline
\hline
Sheet 3 & 26, 36 & 26, 36\tabularnewline
\hline
Sheet 4 & 25, 35 & 15, 24\tabularnewline
\hline
\end{tabular}
\]
If the top sheet contains edges 13, 14, and 15, then there are five possible ways to partition the remaining six edges into three non-intersecting pairs, each of which can be ordered in $3!=6$ ways, resulting in 30 additional possible book representations.  By moving edges and rotating vertices, one can show that each book representation in this case is equivalent to a book representation with top sheet \{13, 14, 46\}.
Thus, two book representations shown above are the only two distinct representations of $K_6$ with sheet number 4.  These representations are mirror images of each other.

Note that cycle 136425 is a trefoil knot.  This can be clearly seen by redrawing one of the projections using the minimum number of crossings, as shown in Figure \ref{fig:trefoil}.  Since 136426 is the only cycle containing all three crossings, all other cycles must be trivial.  Furthermore, because the left-handed and right-handed trefoil are distinct knots, these two representations are distinct from each other and from the three-sheet book representation of $K_6$ which contains no knotted cycles.
\end{proof}

In the book representation given in the theorem, cycle 136425 is a right-handed trefoil and all other cycles are unknots.  Exactly three of the ten pairs of disjoint three cycles form nontrivial links in this embedding: $(125)(346)$, $(135)(246)$ and $(136)(245)$.  All of these links are the Hopf link.  This book representation is ambient isotopic to the linear embedding of $K_6$ containing one trefoil and three Hopf links (see \cite{hj}).

\begin{figure}
\centering
\label{fig:trefoil}
\includegraphics[height = 1.5 in]{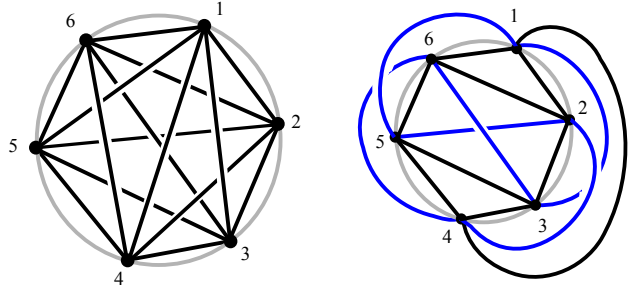}
\caption{A four-sheet book representation of $K_6$ contains exactly one trefoil.}
\end{figure}

\section{Generating $5$-sheet Book Representations of $K_6$}
\label{sec:5sheets}

Next, we find all book representations that require 5 sheets.  As in the 4-sheet case, we begin by considering the possible configurations.

\begin{lemma}
\label{lem:shtdist5}
Every 5-sheet book representation of $K_6$ can be expressed using the interior edge distribution $(3,2,2,1,1)$ or $(2,2,2,2,1)$.
\end{lemma}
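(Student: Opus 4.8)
The plan is to follow the template of Lemma \ref{lem:shtdist}: first pin down the admissible edge distributions combinatorially, and then show that the single ``extra'' distribution can always be massaged into one of the two claimed ones by a single edge move. First I would enumerate the distributions. Each of the five sheets holds between one and three of the nine interior edges, so writing each part as $1$ plus an excess in $\{0,1,2\}$, the excesses must sum to $4$; the three resulting partitions are $(3,3,1,1,1)$, $(3,2,2,1,1)$, and $(2,2,2,2,1)$. Since the last two are exactly the distributions named in the statement, it suffices to show that every representation realizing $(3,3,1,1,1)$ is equivalent to one realizing $(3,2,2,1,1)$. The mechanism will be Move \ref{bump} (the edge move): pushing one edge out of a size-$3$ sheet into an adjacent singleton converts $(3,3,1,1,1)$ into $(3,2,2,1,1)$ while keeping all five sheets nonempty, so I only need to exhibit such a move.

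The key quantitative input is that in $K_6$ each interior edge crosses at most four others; in fact the long edges $14, 25, 36$ cross exactly four and the short edges cross exactly three, which one reads off directly from the chord picture or from Table \ref{tbl:edges}. Using sheet shifting (Move \ref{shift}) together with the sheet reversal available from Move \ref{mirror2}, I would normalize the cyclic order of the five sheets so that the two size-$3$ sheets occupy one of only two relative positions: either cyclically adjacent, or separated by a single sheet. In the separated case a singleton sits between the two size-$3$ sheets, so its two neighbouring sheets contain six edges; since the singleton edge crosses at most four of them, at least two are available to be pulled in by Move \ref{bump}, and this case is finished immediately.

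The remaining, and genuinely delicate, case is when the two size-$3$ sheets $A$ and $B$ are cyclically adjacent, say in cyclic order $A,B,p,q,r$ with $p,q,r$ the singletons. Here the only size-$3$-to-singleton interfaces are $B$--$p$ and $A$--$r$, so a reducing edge move exists unless \emph{both} are blocked, that is, unless the edge $p$ crosses all three edges of $B$ and the edge $r$ crosses all three edges of $A$. I expect this to be the main obstacle, and I would resolve it by proving that such a doubly-blocked configuration cannot occur. Because a blocking singleton edge must cross every edge of the opposite size-$3$ sheet, and a short edge crosses only three edges, the sheet opposite a short singleton is forced to be exactly that edge's crossing-neighbourhood (one checks from Table \ref{tbl:edges} that each such neighbourhood is itself an independent triple), while a long singleton forces one of two explicit triples lying inside its four-element crossing-neighbourhood.

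The crux is then the purely combinatorial claim that the two triples so forced can never be disjoint from each other while also avoiding the singleton edges $p$ and $r$. I would verify this by a short exhaustive check over the six short-edge neighbourhoods and the long-edge triples listed above, cut down by the vertex-rotation and reflection symmetries of Move \ref{rotate} and Move \ref{mirror2}; in every attempted pairing the two forced triples turn out to share an edge or to contain one of the singletons, so no doubly-blocked arrangement exists. Consequently at least one of the interfaces $B$--$p$ or $A$--$r$ admits an edge move, producing a representation with distribution $(3,2,2,1,1)$ and completing the proof. The only real work, and the step I would be most careful about, is this finite verification that no two ``obstructing'' triples are compatible; everything else is the crossing bound together with the moves of Theorem \ref{thm:moves}.
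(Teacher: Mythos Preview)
Your enumeration of partitions is the first misstep: you list only the three multisets $\{3,3,1,1,1\}$, $\{3,2,2,1,1\}$, $\{2,2,2,2,1\}$ and declare the last two done. But in a book representation the sheets carry a cyclic order, and an ``interior edge distribution'' here means a cyclic sequence, not a multiset. The paper's proof accordingly lists seven configurations up to shifting and reflection---$(3,3,1,1,1)$, $(3,2,2,1,1)$, $(3,2,1,2,1)$, $(3,2,1,1,2)$, $(3,1,3,1,1)$, $(3,1,2,2,1)$, $(2,2,2,2,1)$---and must reduce each of the five not on the target list. You never touch $(3,2,1,2,1)$, $(3,2,1,1,2)$, or $(3,1,2,2,1)$, and the lemma as used downstream (Theorem~\ref{5sheets} explicitly refers to the sizes of ``sheet 1'' and ``sheet 5'') genuinely needs the ordered version. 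So as written the proof is incomplete.

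For the cases you do treat, your method diverges from the paper's. Your ``separated'' case is exactly the paper's $(3,1,3,1,1)$, dispatched the same way via the four-crossing bound. For the cyclically adjacent case $(3,3,1,1,1)$, the paper argues structurally: if the singleton in sheet~5 blocks sheet~1 it must be long, which forces the remaining long edges into sheets~1 and~4, whence the short edge in sheet~3 cannot block both its neighbours and an edge from sheet~2 can be moved. You instead assume both interfaces $B$--$p$ and $A$--$r$ are blocked and propose an exhaustive check that no compatible pair of forced triples $A,B$ exists. That check does in fact succeed (every candidate pair shares an edge or swallows one of the singletons), but the paper's route is shorter and avoids case enumeration. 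Of the configurations you omit, $(3,2,1,2,1)$ and $(3,1,2,2,1)$ would fall immediately to your four-crossing observation about a singleton flanked by five edges; the configuration $(3,2,1,1,2)$, however, does not, and the paper handles it by a separate direct verification over the possible top sheets.
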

\begin{proof}
Up to shifting of sheets and mirror images, these are the possible ways to distribute 9 edges to 5 sheets so that each sheet has between 1 and 3 edges:
\[(3,3,1,1,1), (3,2,2,1,1), (3,2,1,2,1), (3,2,1,1,2), (3,1,3,1,1), (3,1,2,2,1), (2,2,2,2,1)\]
As in Lemma \ref{lem:shtdist}, we note that a single edge intersects at most 4 others, so we can move edges to eliminate the configurations $(3,2,1,2,1)$, $(3,1,3,1,1)$ and $(3,1,2,2,1)$.

Consider the configuration $(3,3,1,1,1)$.  If the edge in sheet 5 does not intersect all the edges in sheet 1, then an edge can be moved into sheet 5 and by reflecting and shifting sheets, the representation can be expressed in the $(3,2,2,1,1)$ configuration.  If the edge in sheet 5 does intersect all the edges in sheet 1 as well as the edge in sheet 4, then it intersects four edges, so must be a long edge.  It follows that the other long edges are in sheets 1 and 4.  That forces the edge in sheet 3 to be short, and so it cannot intersect the edge in sheet 4 and all three edges in sheet 2.  Either the number of sheets can be reduced, or an edge from sheet 2 can be moved to obtain a $(3,2,2,1,1)$ configuration.

It remains to eliminate $(3,2,1,1,2)$.  By rotating vertices and taking mirror images, we can assume the top sheet is either $\{13, 15, 35\}$, $\{13, 14, 46\}$, or $\{13, 14, 15\}$.  A straightforward verification shows that all ways to assign pairs of edges to sheets 2 and 5 allow an edge to be moved to obtain a $(3,2,2,1,1)$ configuration.
\end{proof}

When enumerating book representations, we want to avoid listing those which can be expressed using a lower sheet number.  We define an \emph{anchor sequence} of length $s$ to be a sequence of $s$ internal edges, one per sheet, so that each edge in the sequence intersects the anchoring edges in adjacent sheets.  If a book representation contains an anchor sequence and vertex exchange is not possible, then no combination of rotating vertices, shifting sheets, reflecting, or moving edges will ever produce an empty sheet.  The following three lemmas will be used to show that any book representation of $K_6$ can be built using an anchor sequence as a backbone.

\begin{lemma}
\label{lem:anchorbuild}
Suppose that a book representation of an embedding of $K_6$ contains exactly one edge in sheets $1$ and $j$, and exactly two edges in sheet $i$ for all $1 < i < j$.  Then either there exists a sequence of edges $\{e_i\}$ such that $e_i$ is in sheet $i$ and $e_i$ intersects $e_{i-1}$ for all $1 < i \leq j$, or the sheet number can be reduced.
\end{lemma}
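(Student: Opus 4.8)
The plan is to track, sheet by sheet, which edges can be reached from the single edge of sheet~$1$ by a chain of pairwise-intersecting edges, and to show that any failure to reach sheet~$j$ exposes an edge (or a whole sheet's worth of edges) that can be slid into an adjacent sheet. Write $e_1$ for the unique edge of sheet~$1$ and set $S_1 = \{e_1\}$. For $1 < i \le j$, let $S_i$ be the set of edges in sheet~$i$ that intersect, in the projection, some edge of $S_{i-1}$. By construction each edge of $S_i$ is the top of an intersecting chain running back to $e_1$, so if $S_j \neq \emptyset$ then, taking the unique edge of sheet~$j$ and back-tracking through $S_{j-1}, S_{j-2}, \ldots, S_1$, we recover edges $e_i \in S_i$ with $e_i$ intersecting $e_{i-1}$ for all $1 < i \le j$ --- precisely an \emph{anchor sequence}. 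It therefore suffices to prove that if $S_j = \emptyset$ then the sheet number can be reduced.

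Assume $S_j = \emptyset$ and let $k$ be the least index with $S_k = \emptyset$, so that $2 \le k \le j$ and $S_{k-1} \neq \emptyset$. By definition of $S_k$, every edge of $S_{k-1}$ avoids all edges of sheet~$k$ in the projection. The reduction rests on the fact, recorded in Table~\ref{tbl:edges}, that a sheet holds at most three interior edges and at most one long edge --- equivalently, at most three pairwise non-crossing interior chords exist on six points. If sheet~$k-1$ is entirely reachable, i.e.\ $S_{k-1}$ is all of sheet~$k-1$, I split on $k$. When $k = 2$, the edge $e_1$ avoids both edges of sheet~$2$, so I push $e_1$ down into sheet~$2$ by Move~\ref{bump}, emptying sheet~$1$, and delete it by Move~\ref{addsheet}. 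When $k = j$, the lone edge of sheet~$j$ avoids all of $S_{j-1}$, so I push it up into sheet~$j-1$ and delete the now-empty sheet~$j$. When $2 < k < j$, both sheet~$k-1$ and sheet~$k$ carry two edges, and all four would be pairwise non-crossing; this contradicts the three-chord bound, so in fact $|S_{k-1}| \le 1$. Hence in every ``fully reachable'' situation either a sheet is immediately emptied or sheet~$k-1$ contains exactly one reachable edge.

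The remaining possibility is that a non-reachable edge is present below sheet~$k$, and here the argument uses the minimality built into the hypothesis. Let $m$ be the smallest index carrying a non-reachable edge $b$. Sheet~$1$ is fully reachable, so $m > 1$ and sheets $1, \ldots, m-1$ are fully reachable, whence $S_{m-1}$ is all of sheet~$m-1$; since $b \notin S_m$, the edge $b$ meets no edge of $S_{m-1}$ and so avoids all of sheet~$m-1$ and may be pushed up by Move~\ref{bump}. To see this process terminates I use the potential $\Phi = \sum_{b}(\text{sheet index of } b)$, summed over non-reachable edges: each upward push of the minimal non-reachable edge strictly decreases $\Phi$ without altering $S_i$ for $i \ge m$ (removing a non-reachable edge from sheet~$m$ leaves $S_m$ unchanged). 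When no non-reachable edge remains, every sheet below sheet~$k$ is fully reachable, and the previous paragraph produces an empty sheet to delete.

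I expect the main obstacle to be the bookkeeping in this last step: confirming that every invoked instance of Move~\ref{bump} genuinely satisfies its non-intersection hypothesis after the configuration has been rearranged, and that the terminal ``fully reachable'' configuration is always one to which an emptying move applies. This is exactly where the $K_6$-specific constraints do the work --- each interior edge meets at most four others, any two long edges intersect (so a sheet has at most one long edge), and at most three interior chords are mutually non-crossing. These bounds both limit the case analysis and guarantee that the overfull four-chord situation never blocks a push, so the push-up procedure cannot stall without having created, and allowed us to delete, an empty sheet.
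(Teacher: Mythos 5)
Your argument is correct in substance but follows a genuinely different route from the paper's. The paper proceeds by induction on $j$: it builds the chain greedily from sheet $1$ downward, and whenever the current edge $e_i$ meets nothing in sheet $i+1$ it pushes $e_i$ \emph{forward} into sheet $i+1$, applies the inductive hypothesis to the first $i$ sheets (which now end in a single-edge sheet), and uses the three-edges-per-sheet bound to conclude that the new chain's endpoint must meet one of the original edges of sheet $i+1$. You instead compute the reachability sets $S_i$ globally, push unreachable edges \emph{backward} toward sheet $1$, and replace the induction by the potential $\Phi$; the same two ingredients (the edge move, Move \ref{bump}, and the fact that at most three interior edges are pairwise non-crossing) do the work in both proofs. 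Your version buys a cleaner algorithmic picture and avoids re-invoking the lemma on configurations whose edge counts have drifted from the hypothesis, which the paper's inductive step quietly does. Two points need tightening. First, once an edge is pushed into sheet $1$ you must redefine $S_1$ to be \emph{all} of sheet $1$ (the lemma only requires the chain to start at some edge of sheet $1$); otherwise that edge stays permanently ``non-reachable'' with nowhere left to go and $\Phi$ stalls at a positive value. Second, your closing sentence is off: when no non-reachable edge remains, the correct conclusion is that $S_j\neq\emptyset$ and the anchor sequence exists outright, not that ``the previous paragraph produces an empty sheet''---the deletions happen, if at all, during the pushing phase, namely whenever the sheet being pushed out of is left empty (as occurs when the lone edge of sheet $j$ becomes the minimal non-reachable edge, or when every edge of some sheet is non-reachable). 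With those repairs the case analysis of your second paragraph is no longer needed, since the push-and-delete loop already covers $k=2$, $k=j$, and the intermediate case.
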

\begin{proof}
We proceed using induction.  When $j=2$, the result is trivial.  Suppose we know the lemma is valid when $2\leq k<j$.  Let $e_1$ denote the single edge in sheet 1, and let $e_j$ denote the single edge in sheet $j$.

If $e_1$ does not intersect at least one of the edges in sheet 2, then the sheet number can be reduced; therefore, we assume there is some edge $e_2$ in sheet 2 which intersects $e_1$. For some $i<j-1$, suppose we have found edges $e_1, e_2, ..., e_i$, one per sheet, so that each edge intersects the next.  If  $e_i$  does not intersect an edge in sheet $i+1$, then it can be moved to sheet $i+1$, leaving a single edge.  The inductive hypothesis applies, so either the sheet number can be reduced or the remaining edge is part of a new sequence of intersecting edges between sheet 1 and sheet $i$.  Since at most 3 edges can fit in a sheet of $K_6$, this remaining edge must intersect one of the original edges in sheet $i+1$, so we can extend our sequence.  Continue until the sequence is length $j-1$.  Note that if either of the edges in sheet $j-1$ fails to intersect $e_j$, then that edge can be moved into sheet $j$, and the inductive hypothesis applies.  Therefore, if the sheet number cannot be reduced, the sequence of intersecting edges can be extended to $\{e_1, e_2, \ldots, e_{j}\}$.
\end{proof}

\begin{lemma}
\label{lem:5anchor}
Any book representation with configuration $(2,2,2,2,1)$ contains an anchor sequence or can be expressed using less than five sheets.
\end{lemma}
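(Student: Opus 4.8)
The plan is to reduce to Lemma \ref{lem:anchorbuild} by using the cyclic adjacency of sheets: because shifting sheets (Move \ref{shift}) is an ambient isotopy that reads sheet numbers modulo $s$, sheet $5$ is adjacent to both sheet $4$ and sheet $1$. I would therefore ``unfold'' the $(2,2,2,2,1)$ arrangement at the single-edge sheet, placing the lone edge $e_5$ of sheet $5$ at both ends of a linear strip of sheets ordered $5,1,2,3,4,5$. This strip has a single edge in its first and last sheets and exactly two edges in each of its four interior sheets, which is precisely the hypothesis of Lemma \ref{lem:anchorbuild} with $j=6$.

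First I would record the trivial reductions that also initialize the induction. If $e_5$ crosses no edge of sheet $4$, then by the edge move (Move \ref{bump}) it slides into sheet $4$, emptying sheet $5$ and lowering the sheet number; the same holds for sheet $1$ by cyclic adjacency. So I may assume $e_5$ crosses an edge in each of its two neighbouring sheets. Running the edge-moving induction of Lemma \ref{lem:anchorbuild} along the unfolded strip then yields either a reduction in the sheet number or a chain $e_5, f_1, f_2, f_3, f_4, e_5$ of consecutively crossing edges with $f_i$ in sheet $i$. Folding the two copies of $e_5$ back together converts this chain into a cyclic sequence $f_1, f_2, f_3, f_4, f_5 = e_5$ in which $e_5$ crosses both $f_1$ and $f_4$, that is, an anchor sequence of length $5$.

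The main obstacle is that the unfolded strip is not a genuine book representation --- the edge $e_5$ cannot occupy two sheets simultaneously --- so Lemma \ref{lem:anchorbuild} cannot be quoted verbatim and its argument must be re-run in the cyclic setting. The delicate point is the closing step: when the induction tries to attach the sheet-$4$ end of the chain back to $e_5$, the relevant sheet-$4$ edge may fail to cross $e_5$, in which case it must be pushed into sheet $5$ (now holding two edges) and the configuration re-examined. I would handle this exactly as in Lemma \ref{lem:anchorbuild}, treating such a move either as a genuine sheet reduction (if it empties a sheet) or as a step that shifts the location of the single-edge sheet and feeds back into the induction. Since $K_6$ has only nine interior edges and each sheet holds at most three, every such move strictly simplifies the configuration, so the process terminates with either an empty sheet or a complete cyclic anchor sequence.
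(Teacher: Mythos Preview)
Your reduction to Lemma~\ref{lem:anchorbuild} via unfolding is the natural first move, and you correctly flag that the closing step---reattaching the sheet-$4$ end of the chain to $e_5$---is where the cyclic argument needs new work. The gap is in how you propose to handle that step. When the candidate edge in sheet $4$ fails to cross $e_5$ and you push it into sheet $5$, the resulting configuration is $(2,2,2,1,2)$, which after a cyclic shift is once again $(2,2,2,2,1)$: no sheet has been emptied, no edge has been removed, and the partition shape is unchanged. Your claim that ``every such move strictly simplifies the configuration'' is therefore unsupported; there is no decreasing invariant in sight, and ``feeding back into the induction'' is circular because the statement you are trying to prove is precisely the one for $(2,2,2,2,1)$ configurations. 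The three-edges-per-sheet bound only prevents four pairwise non-crossing edges from landing in one sheet; it does not furnish a potential function on the space of $(2,2,2,2,1)$ layouts.

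The paper's proof shows that this closing step genuinely requires $K_6$-specific combinatorics rather than a soft termination argument. It builds intersecting pairs $(e_1,e_2)$ and $(e_3,e_4)$ from the two sides of $e_5$ as you suggest, but when $e_2$ and $e_3$ fail to meet it does not iterate: instead it runs a finite case analysis on how the second edges $f_j$ in each sheet interact, repeatedly invoking Lemma~\ref{lem:anchorbuild} on linear sub-strips to produce anchor sequences of the form $(e_1,f_2,f_3,e_4,e_5)$, $(f_1,f_2,e_3,e_4,e_5)$, and so on. In the last surviving sub-case the only remaining obstruction is that all nine interior edges form a single cyclic intersecting chain of length nine; the paper then tabulates all sixteen such chains in $K_6$ (up to rotation) and checks that each one forces two crossing edges into the same sheet of the original five-sheet layout, a contradiction. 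So an exhaustive enumeration really is used to close the argument, and your proposal would need either that enumeration or a genuinely new idea to replace it.
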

\begin{proof}
Let $e_5$ be the edge in sheet 5.  At least one of the edges in sheet 1 must intersect $e_5$, or $e_5$ could be moved into sheet 1, reducing the sheet number.  Move an edge from sheet 1 to sheet 5 if necessary so that all edges in sheet 1 intersect $e_5$.  If the sheet number cannot be reduced, then some edge remaining in sheet 1 intersects an edge in sheet 2.  Label these edges as $e_1$ and $e_2$ respectively.  Similarly, at least one of the edges in sheet 4 must intersect $e_5$, and the same argument produces edges $e_3$ and $e_4$ in sheets 3 and 4 respectively so that $e_3$ intersects $e_4$ and $e_4$ intersects $e_5$.  If the edges $e_2$ and $e_3$ intersect, then the sequence $(e_1, e_2, e_3, e_4, e_5)$ forms an anchor sequence.

Suppose that $e_2$ and $e_3$ do not intersect.  Let $f_j$ denote the second edge in sheet $j$, for $1\leq j\leq 4$.  We consider two possible cases; either  $e_3$ intersects  $f_2$ or it does not.

Case 1:  Assume the edges $e_3$ and $f_2$ do not intersect.  This implies we can move edge $e_3$ to sheet 2, and by Lemma \ref{lem:anchorbuild}, either some edge in sheet 4 simultaneously intersects $f_3$ and $e_5$ or the sheet number can be reduced.  Relabel this edge as $e_4$ if necessary.  If $f_3$ intersects $e_2$, then $(e_1, e_2, f_3, e_4, e_5)$ is an anchor sequence.  Otherwise, we know that $f_3$ intersects $f_2$ and we can move edges $e_2$ and $e_3$ to sheet 3.  Lemma \ref{lem:anchorbuild} guarantees there is some edge in sheet 1 that intersects both $f_2$ and $e_5$ or the sheet number can be reduced; if necessary, relabel this edge as $e_1$.  Then $(e_1, f_2, f_3, e_4, e_5)$ forms an anchor sequence.

Case 2:  Assume $e_3$ intersects $f_2$.  If $f_2$ can be moved to sheet 1, then Lemma \ref{lem:anchorbuild} implies there is some sequence of intersecting edges from $e_2$ in sheet 2 to $e_5$ in sheet 5, which together with $e_1$ creates an anchor sequence.  If $f_2$ intersects $e_1$, then $(e_1, f_2, e_3, e_4, e_5)$ forms an anchor sequence.  Otherwise, $f_2$ intersects $f_1$.

If $f_1$ intersects $e_5$, then $(f_1, f_2, e_3, e_4, e_5)$ forms an anchor sequence. Otherwise, move $f_1$ to sheet 5 and move $f_2$ to sheet 1.  Unless $e_2$ intersects $f_3$, the sheet number can be reduced.  If $f_3$ intersects $e_4$, then $(e_1, e_2, f_3, e_4, e_5)$ forms an anchor sequence.  Otherwise, $f_3$ intersects $f_4$ (or the sheet number can be reduced).  Finally, $f_4$ must intersect one of $e_5$ or $f_1$.  If $f_4$ intersects $e_5$, then $(e_1, e_2, f_3, f_4, e_5)$ is an anchor sequence.  If not, then the sequence $(e_1, e_2, f_3, f_4, f_1, f_2, e_3, e_4, e_5)$ is a sequence of edges so that each edge intersects the next, and $e_5$ intersects $e_1$; in other words, the sequence forms an anchor sequence of length nine.  We show no such sequence is possible.

By rotating vertices, we can assume that the book representation with five sheets has $13$ in sheet 1; so either $e_1$ or $f_1$ is $13$.   The below table shows all possible anchor sequences of length nine, shifting sheets so that edge $13$ appears in sheet 1:
\[
\begin{tabular}{|c|c|c|c|c|c|c|c|c|c|c|c|c|c|c|c|c|c|}
\hline
13 & 13 & 13 & 13 & 13 & 13 & 13 & 13 & 13 & 13 & 13 & 13 & 13 & 13 & 13 & 13 & $e_1$ & $f_1$ \tabularnewline
24 & 24 & 24 & 24 & 24 & 24 & 25 & 25 & 25 & 25 & 26 & 26 & 26 & 26 & 26 & 26 & $e_2$ & $f_2$ \tabularnewline
35 & 35 & 35 & 35 & 36 & 36 & 14 & 46 & 36 & 46 & 14 & 14 & 15 & 15 & 15 & 15 & $f_3$ & $e_3$ \tabularnewline
14 & 46 & 46 & 46 & 15 & 25 & 36 & 15 & 14 & 35 & 25 & 35 & 36 & 46 & 46 & 46 & $f_4$ & $e_4$ \tabularnewline
36 & 15 & 15 & 25 & 26 & 14 & 24 & 36 & 26 & 14 & 36 & 24 & 14 & 25 & 35 & 35 & $f_1$ & $e_5$ \tabularnewline
25 & 26 & 36 & 14 & 14 & 35 & 35 & 24 & 15 & 26 & 15 & 36 & 25 & 36 & 14 & 24 & $f_2$ & $e_1$ \tabularnewline
46 & 14 & 25 & 36 & 35 & 46 & 46 & 35 & 46 & 15 & 46 & 15 & 46 & 14 & 25 & 36 & $e_3$ & $e_2$ \tabularnewline
15 & 36 & 14 & 15 & 46 & 15 & 15 & 14 & 35 & 36 & 35 & 46 & 35 & 35 & 36 & 14 & $e_4$ & $f_3$ \tabularnewline
26 & 25 & 26 & 26 & 25 & 26 & 26 & 26 & 24 & 24 & 24 & 25 & 24 & 24 & 24 & 25 & $e_5$ & $f_4$ \tabularnewline
\hline
\end{tabular}
\]
Note that regardless of the choice of length nine anchor sequence, a conflict arises in the 5-sheet book representation: either $e_1$ intersects $f_1$, $e_2$ intersects $f_2$, or $e_3$ intersects $f_3$, which contradicts the definition of $f_j$.  This completes the case when $e_3$ intersects $f_2$.

Thus, any book representation with configuration $(2,2,2,2,1)$ either has an anchor sequence or is equivalent to a representation with less than five sheets.
\end{proof}

Using the above lemmas together with the equivalences from Section \ref{sec:equiv}, we can identify all book representations of $K_6$ with five sheets.

\begin{theorem}
There are ten distinct book representations of $K_6$ with sheet number five.
\label{5sheets}
\end{theorem}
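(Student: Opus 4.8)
The plan is to build every five-sheet book representation around an anchor sequence, exploiting Lemmas \ref{lem:shtdist5}, \ref{lem:anchorbuild}, and \ref{lem:5anchor} to eliminate redundant cases before any explicit enumeration. By Lemma \ref{lem:shtdist5}, every five-sheet representation reduces to one of the two distributions $(3,2,2,1,1)$ or $(2,2,2,2,1)$. For the $(2,2,2,2,1)$ case, Lemma \ref{lem:5anchor} guarantees an anchor sequence $(e_1,e_2,e_3,e_4,e_5)$, and the analogous backbone for $(3,2,2,1,1)$ follows from Lemma \ref{lem:anchorbuild} applied to the single-edge sheets. So the first step is to record, up to the moves of Theorem \ref{thm:moves}, the possible anchor sequences: using rotation of vertices I would fix edge $13$ in sheet 1, then list the chains of mutually intersecting interior edges of length five, one per sheet, checking intersections against the length and adjacency data implicit in Table \ref{tbl:edges}.

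The second step is to populate the remaining sheets around each fixed anchor. For the $(2,2,2,2,1)$ distribution, four of the five sheets carry a second (non-anchor) edge; for each candidate anchor I would enumerate the ways to place the remaining interior edges so that no two edges in the same sheet intersect, then immediately discard any configuration that admits an edge move (Move \ref{bump}) collapsing it to four sheets, or a vertex exchange (Move \ref{vexchange}) or sheet shift (Move \ref{shift}) reducing it to a lower case. The $(3,2,2,1,1)$ distribution is handled the same way, with the triple sheet constrained to the twenty admissible triples in Table \ref{tbl:edges}. The moves of Section \ref{sec:equiv} then collapse the surviving list onto a small set of canonical representatives; I expect this reduction, rather than the initial enumeration, to be the bulk of the work.

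The final step is to prove the surviving representatives are genuinely distinct and number exactly ten. Here I would invoke the same isotopy invariant used in the earlier sections: the multiset of knot types of the $3$- through $6$-cycles and the multiset of link types of disjoint pairs of $3$-cycles. Since every cycle is trivial, a trefoil, or a figure-eight, and every disjoint $3$-cycle pair is trivial, Hopf, or Solomon, these multisets are computable directly from each representation and are ambient isotopy invariants. Two representations whose invariants differ cannot be equivalent; for any pair with matching invariants I would exhibit an explicit sequence of moves identifying them (or, where chirality matters, record that the representation differs from its mirror image, distinguishing left- and right-handed trefoils as in the four-sheet theorem). Matching the count of distinct invariant-classes to ten then completes the proof.

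The main obstacle I anticipate is the second step: the number of raw partitions for the $(2,2,2,2,1)$ and $(3,2,2,1,1)$ distributions is large, and verifying for each one whether some composition of edge moves, vertex exchanges, shifts, and double reflections reduces it is delicate and error-prone. The anchor-sequence machinery controls this by pinning down a rigid backbone that no move can dissolve, but confirming that two superficially different fillings of the same backbone are in fact isotopic (versus certifying them distinct via the knot/link invariants) is where the careful bookkeeping lies.
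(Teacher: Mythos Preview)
Your overall architecture matches the paper's: reduce to the two configurations via Lemma~\ref{lem:shtdist5}, build the $(2,2,2,2,1)$ cases on anchor sequences via Lemma~\ref{lem:5anchor}, enumerate fillings, collapse by the moves of Theorem~\ref{thm:moves}, and separate survivors by their knot and link content. Two points deserve attention, one minor and one a genuine gap.

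The minor point is your treatment of the $(3,2,2,1,1)$ configuration. Lemma~\ref{lem:anchorbuild} as stated requires the two end sheets to carry a single edge and the intermediate ones exactly two; a $(3,2,2,1,1)$ distribution does not fit that template, so ``applied to the single-edge sheets'' is not quite a proof. The paper does not force an anchor here at all: it argues directly that unless the lone edge in sheet~5 meets every edge in sheet~1, one can pass to $(2,2,2,2,1)$; that forces sheet~5 to be the long edge $25$ and sheet~1 to be $\{13,14,46\}$ (up to reflection), which pins sheet~4 as $36$ and leaves only three candidates for sheet~3, two of which again collapse to $(2,2,2,2,1)$. This yields exactly one genuinely new representation (5s5) and its mirror.

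The real gap is in your distinctness step. You propose that whenever two survivors share the same multiset of knot and link types (including chirality of trefoils), you will exhibit moves identifying them. That fails here: representations 5s4 and 5s5 both carry five Hopf links and six trefoils with the same chirality breakdown (five left, one right), yet they are \emph{not} ambient isotopic. Your invariant does not separate them, and no sequence of moves exists, so your plan stalls. The paper resolves this with a strictly finer invariant: it observes that in 5s5 a single edge lies in all six knotted cycles, whereas no edge of 5s4 has that property. Since an ambient isotopy must carry edges to edges and knotted cycles to knotted cycles of the same type, this edge-incidence pattern is itself an isotopy invariant and distinguishes the two. You need some such refinement beyond the bare multiset to finish.
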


\begin{proof}
By Lemma \ref{lem:shtdist5}, it suffices to consider the configurations $(3,2,2,1,1)$ and $(2,2,2,2,1)$.  First, consider the representations that can be expressed in a $(2,2,2,2,1)$ configuration.  By Lemma \ref{lem:5anchor}, we can begin with the possible anchors of length 5.  Up to rotation and reflection, there are only three distinct anchor sequences:  $\{13, 24, 35, 46, 25\}$, $\{13, 24, 35, 14, 25\}$, and $\{13, 24, 36, 14, 25\}$.  Enumerating all possible ways to insert the extra edges and then checking for equivalences using the moves from Section \ref{sec:equiv} results in the eight representations 5s1, 5s2, 5s3, 5s4, and their mirror images.  See Section \ref{appendix} for a list of edges in each sheet and knots and links contained in each representation.  These representations must be distinct since the number of Hopf links, left trefoils, or right trefoils differs between each pair.

Next, consider the configuration of edges $(3,2,2,1,1)$.  We assume the edge in sheet 5 intersects all edges in sheet 1; otherwise, the representation can be expressed in a $(2,2,2,2,1)$ configuration. That implies that a long edge is in sheet 5, and by rotation of vertices we may assume this edge is $25$.  The top sheet either contains $\{13, 14, 46\}$ or its reflection $\{13, 36, 46\}$, obtained by reflecting across the diameter $25$.    Up to reflection, we can assume the top sheet is $\{13, 14, 46\}$ which forces edge $36$ to be in sheet 4.  Sheet 3 must consist of a pair non-intersecting edges at least one of which intersects $36$, so it must be one of $\{15,24\}, \{15, 35\},$ or $\{24, 26\}$.  The choice made determines sheet 2.  In the second and third options, edges can be moved to obtain a $(2,2,2,2,1)$ representation; so the only 5-sheet representations that cannot be expressed in a  $(2,2,2,2,1)$ configuration is equivalent to the following or its mirror image:
\[\begin{tabular}{|c|c|}
\hline
Sheet 1 & 13, 14, 46\tabularnewline
\hline
Sheet 2 & 26, 35\tabularnewline
\hline
Sheet 3 & 15, 24\tabularnewline
\hline
Sheet 4 & 36\tabularnewline
\hline
Sheet 5 & 25    \tabularnewline
\hline
\end{tabular}
\]

This appears as representation 5s5 in Section \ref{appendix}. Like representation 5s4, this embedding contains exactly six trefoils and five Hopf links.  (See Section \ref{appendix}.)  Note that edge $36$ appears in all six of the knotted cycles.  Since no edge in 5s4 appears in all knotted cycles, there is no way to construct an ambient isotopy between the two book representations or their mirror images.
\end{proof}

Book representations with sheet number 5 contain either three or five Hopf links; two, three, or four knotted Hamiltonian cycles (all trefoils); and either one or two knotted 5-cycles.

\section{Generating Book Representations of $K_6$ with six or more sheets}
\label{sec:moresheets}
Enumerating the book representations with six through nine sheets can be approached in a similar manner.  First, consider the set of possible configurations.
The following lemma helps to reduce the number of configurations which need to be considered:

\begin{lemma} Suppose a book representation of $K_6$ has a sheet with three interior edges between two sheets containing one edge each.  Then at least one of three interior edges in the middle sheet can be moved to an adjacent sheet.
\label{lem:no3sheet}
\end{lemma}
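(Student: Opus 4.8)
<br>

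The plan is to analyze the geometric constraints forced by having a three-edge sheet sandwiched between two single-edge sheets. The key structural fact, established earlier in Table \ref{tbl:edges}, is that a sheet with three interior edges in $K_6$ must contain exactly one long edge (of length three) together with two short edges, since every pair of long edges intersects. So first I would fix notation: call the middle sheet $S$ with its three edges, and let $a$ and $b$ denote the single edges in the two adjacent sheets. The goal is to show that at least one edge of $S$ fails to intersect $a$, or at least one edge of $S$ fails to intersect $b$; by Move \ref{bump} (the edge move), such a non-intersecting edge can then be pushed into the appropriate adjacent sheet.

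The heart of the argument is a counting bound on intersections. In $K_6$, any single interior edge intersects at most four other edges when projected to the plane, and among the three edges inside $S$ there are limited patterns of which edges an outside edge can meet. The approach I would take is to argue by contradiction: suppose every edge in $S$ intersects $a$ \emph{and} every edge in $S$ intersects $b$. Then both $a$ and $b$ are edges that simultaneously cross all three edges of $S$. Since $S$ contains a long edge plus two short edges spanning the circle, an edge crossing all three of them is heavily constrained — I expect that the only candidate crossing a given three-edge sheet is forced to be a particular long edge (a diameter $14$, $25$, or $36$). The cleanest route is to go through the seven three-edge sheets listed in Table \ref{tbl:edges} (the others follow by the symmetries of Theorem \ref{thm:moves}) and check, for each, the set of edges that cross all three of its members. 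If that set has at most one element, then $a$ and $b$ cannot both lie in it (they are distinct edges in distinct sheets), giving the contradiction immediately.

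The key steps, in order, would be: (1) reduce to the canonical three-edge sheets using rotation of vertices (Move \ref{rotate}) and reflection (Move \ref{mirror2}); (2) for a representative three-edge sheet, such as $\{13,14,46\}$, determine explicitly which edges of $K_6$ cross all three of its edges, and observe that this forcing set is a single long edge (here $25$); (3) conclude that the two single edges $a$ and $b$ in the flanking sheets cannot both intersect all of $S$, since they would have to be the same long edge; (4) invoke Move \ref{bump} to move the non-intersecting edge out of $S$. I would also separately handle the three-edge sheets that contain no long edge if any such exist — but Table \ref{tbl:edges} shows every three-edge sheet contains exactly one long edge, so this case does not arise.

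The main obstacle I anticipate is step (2): verifying that the ``crossing set'' of each three-edge sheet is small really is a finite but slightly tedious case check, and one must be careful that the reductions in step (1) genuinely cover all seven (really fourteen, counting both columns of Table \ref{tbl:edges}) three-edge configurations up to the symmetry group generated by Moves \ref{rotate} and \ref{mirror2}. The subtle point is that the relevant group acts on the set of three-edge sheets with a few orbits, and I must confirm the forcing set of size at most one is preserved under these symmetries so that checking one representative per orbit suffices. Once that orbit reduction is nailed down, the contradiction and the concluding edge move are immediate.
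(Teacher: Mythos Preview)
Your approach is essentially the same as the paper's: reduce the middle three-edge sheet to a handful of representatives via rotation and reflection, compute for each the set of interior edges crossing all three members, observe that this set has at most one element, and conclude that two distinct flanking single edges cannot both block every edge of $S$. The paper reduces to the three representatives $\{13,14,46\}$, $\{13,14,15\}$, and $\{13,15,35\}$ and carries out exactly this check.

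One factual slip worth correcting: your ``key structural fact'' that every three-edge sheet contains exactly one long edge is false. The argument that any two long edges intersect only shows a three-edge sheet contains \emph{at most} one long edge; the sheets $\{13,15,35\}$ and $\{24,26,46\}$ in Table~\ref{tbl:edges} consist entirely of short edges. So the case you dismiss in your final paragraph does arise. Fortunately it is the easiest case---no single edge crosses all three of $13$, $15$, $35$ (each of the six remaining interior edges misses at least one of them), so the crossing set is empty and the conclusion follows immediately. This does not break your strategy, but you should not build the argument on the incorrect premise that a long edge is always present in $S$; the paper handles $\{13,15,35\}$ explicitly for precisely this reason.
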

\begin{proof}  After rotating vertices and reflecting as needed, we may assume the sheet with 3 interior edges contains either $\{13, 14, 46\}$, $\{13, 14, 15\}$, or $\{13, 15, 35\}$.  The edge $25$ is the only edge which intersects all three of the edges $13$, $14$, and $46$.  The edge $26$ is the only edge which intersects all of $13$, $14$, and $15$.  No edge intersects all three of the edges $13$, $15$, and $35$.  Therefore, if each of the adjacent sheets contains only one edge, some edge from the middle sheet can always be moved. \end{proof}

In fact, we will show that every configuration with six or more sheets is equivalent to one which has at most two edges per sheet, so that Lemma \ref{lem:anchorbuild} guarantees the existence of an anchor sequence.

Next, we list the set of possible non-equivalent anchor sequences in each case.  Note that each short edge intersects two other short edges and one long edge.   Let $L$ denote a long edge, chosen from the set of edges $\{14, 25,36\}$, and let $S$ denote a short edge, chosen from the set of edges $\{13, 15, 24, 26, 35, 46\}$).  Consider the possible sequences of long and short edges.  The sub-sequence $LSL$ cannot appear in an anchor sequence, since each edge of length two intersects a unique edge of length three.  Similarly, $LSSSSL$ cannot appear, since the first and fourth consecutive short edges intersect the same long edge.  Since every anchor sequence of length five or more must contain two consecutive short edges, we shift sheets to obtain the longest possible subsequence of short edges at the start.  Up to reflection and rotation, we may assume all anchor sequences contain $13$ in sheet 1 and $24$ in sheet 2.   If an anchor sequence contains any long edges, its last edge will be edge $25$; otherwise, the last edge is edge $26$.

After determining the possible anchor sequences, we build book representations by inserting the additional edges not contained in the anchor sequence and check for equivalences among the set of possible representations using the moves from Section \ref{sec:equiv}.  Finally, we consider the collection of knots and links contained within the representations to ensure that the remaining representations are distinct.

Below, we follow these steps to determine the number of distinct book representations for sheet numbers six through nine.

\begin{theorem}
There are 20 distinct book representations of $K_6$ with sheet number six.
\end{theorem}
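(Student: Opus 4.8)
```latex
The plan is to follow exactly the template established in the five-sheet case (Theorem \ref{5sheets}), adapting each step to six sheets. First I would enumerate the possible edge distributions: nine interior edges across six sheets, each holding one to three edges. Using the edge-moving argument from Lemmas \ref{lem:shtdist} and \ref{lem:shtdist5} (a single edge meets at most four others, so a lone sheet wedged among sufficiently full neighbors can absorb an edge), together with the new Lemma \ref{lem:no3sheet} (a three-edge sheet flanked by two singleton sheets can always shed an edge), I would reduce the list of configurations to those with at most two edges per sheet. The surviving distributions should be $(2,2,2,1,1,1)$ and any sibling forced to retain a three-edge sheet only when that sheet is adjacent to a two-edge sheet; I expect the reductions to collapse everything to configurations where Lemma \ref{lem:anchorbuild} guarantees an anchor sequence as a backbone.

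Next I would classify the admissible anchor sequences of the relevant lengths. Using the constraints already derived in Section \ref{sec:moresheets}---that $LSL$ and $LSSSSL$ cannot occur, that we may normalize so edge $13$ sits in sheet 1 and $24$ in sheet 2, and that the final edge is $25$ when long edges appear and $26$ otherwise---I would write down the finitely many $L/S$ patterns of the appropriate length and instantiate each with concrete edges. This gives a short explicit list of candidate anchors up to rotation and reflection, just as the five-sheet proof produced three anchors of length five.

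Then, for each anchor, I would enumerate all ways to insert the remaining (non-anchor) interior edges into the sheets subject to the non-intersection constraint within each sheet, and apply the moves of Theorem \ref{thm:moves}---rotation, sheet shift, edge move, vertex exchange, double reflection, and sheet insertion/deletion---to collapse the list into equivalence classes. Finally, to confirm the surviving 20 representations are genuinely distinct, I would invoke the invariant used throughout: the number and type of knotted cycles (trefoils, distinguished by handedness, and figure-eights) and the number of Hopf and Solomon links. Where two representations share the same link and knot counts, I would fall back on the finer argument used for 5s4 versus 5s5, examining which edges are common to all knotted cycles, since an ambient isotopy must preserve such structural features.

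The main obstacle will be the equivalence-reduction bookkeeping in the third step: with six sheets the number of edge-insertion patterns per anchor grows substantially, and verifying that every candidate either reduces to fewer sheets or matches one of the twenty canonical forms requires a careful, systematic sweep rather than a slick argument. In particular, vertex exchange (Move \ref{vexchange}) has a delicate hypothesis---all edges at one vertex must lie in higher sheets than all edges at an adjacent vertex---so tracking when it is legal across the many configurations is where errors are most likely to creep in, and I would organize this as an exhaustive but tabulated case check analogous to the length-nine anchor table in Lemma \ref{lem:5anchor}.
```
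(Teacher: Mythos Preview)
Your proposal is correct and follows essentially the same approach as the paper: reduce every configuration to one with at most two edges per sheet (the paper disposes of the residual $(3,2,1,1,1,1)$ case by a short direct analysis), enumerate the three length-six anchor sequences, insert the remaining three edges in all ways, and collapse via the moves of Theorem~\ref{thm:moves}. One simplification you can take: in the six-sheet case the knot and link type counts already distinguish all ten representations from each other and from their mirrors, so the finer 5s4-versus-5s5 style edge argument you anticipate is not needed here.
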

\begin{proof} Due to Lemma \ref{lem:no3sheet}, the only possible configuration we need to consider with three edges in a sheet is $(3,2,1,1,1,1)$.  Suppose no edge from the top sheet can be bumped to sheet 6, and the sheet number cannot be reduced.  By rotating vertex labels, we may assume the top sheet is either $\{13, 14, 46\}$ or $\{13, 14, 15\}$.  If the top sheet is $\{13, 14, 46\}$, sheet 6 must contain edge $25$ which forces sheet 5 to contain edge $36$.  Sheet 4 can either contain edge $15$, which requires edge $26$ in sheet 3, or sheet 4 can contain edge $24$, which requires edge $35$ in sheet 3.  In either case, the remaining pair of edges intersects, so cannot be assigned to sheet 2.  If the top sheet is $\{13, 14, 15\}$, then edge $26$ is in sheet 6, but there is no other edge which intersects edge $26$.  Therefore, a $(3,2,1,1,1,1)$ configuration is either equivalent to a $(2,2,1,1,1,2)$ configuration, or can be reduced to a lower sheet number.

A configuration with at most two edges per sheet contains an anchor sequence.  The possible non-equivalent anchor sequences of length 6, up to reflection, are $(13, 24, 35, 14, 36, 25)$, $(13, 24, 35, 46, 15, 26)$, and $(13, 24, 36, 15, 46, 25)$.  The missing three edges for each possible anchor can be inserted in multiple different ways; however, using the equivalences in Section \ref{sec:equiv}, each of them is equivalent to one of the ten six-sheet book representations shown in Section \ref{appendix}, or its mirror image.  No two of the six-sheet representations have the same number of links and knots of each type; therefore, no pair can be equivalent.  The set of knots and links for each representation is also distinct from those obtained in the lower sheet representations, guaranteeing the sheet number cannot be reduced.  Furthermore, in each representation listed the number of Hamiltonian cycles which are right handed trefoils is different from the number of Hamiltonian cycles which are left-handed trefoils--so it is impossible for any six-sheet representation to be equivalent to is mirror image.  Therefore, there are exactly 20 distinct six-sheet book representations, as claimed.
\end{proof}

Note that six is the minimum number of sheets required to obtain a four-crossing knot or link--book representations of $K_6$ with six sheets can contain a figure-eight knot $4_1$ or a Solomon link $4^2_1$ (but not both).

\begin{theorem}
There are ten distinct book representations of $K_6$ with sheet number seven.
\end{theorem}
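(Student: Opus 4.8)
The plan is to reuse the three-stage template from the four-, five-, and six-sheet theorems: reduce the admissible edge distributions to those with at most two edges per sheet, enumerate the length-seven anchor sequences and fill in the two leftover edges, and then separate the survivors using their knot-and-link data.

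First I would pin down the configurations. With nine interior edges on seven sheets and at most three edges per sheet, the only distribution containing a three-edge sheet is $(3,1,1,1,1,1,1)$; after a shift of sheets (Move~\ref{shift}) this sheet lies between two single-edge sheets, so Lemma~\ref{lem:no3sheet} moves one of its edges to a neighbor and yields a $(2,2,1,1,1,1,1)$ distribution. Since $9 = 2\cdot 2 + 5\cdot 1$, every seven-sheet representation may thus be assumed to have exactly two two-edge sheets and five single-edge sheets, and the argument of Lemma~\ref{lem:anchorbuild} shows that such a configuration either carries an anchor sequence of length seven or is expressible with fewer sheets. Because a genuine seven-sheet representation cannot be reduced, it must contain a length-seven anchor.

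The combinatorial core is listing those anchors up to rotation and reflection. I would use the intersection structure established earlier: the six short edges $\{13,24,35,46,15,26\}$ form a single six-cycle under intersection, each short edge meets exactly one long edge, and the three long edges $\{14,25,36\}$ meet pairwise. With the forbidden patterns $LSL$ and $LSSSSL$, a length-seven anchor must contain at least two long edges---a single long edge would force all six short edges into a Hamiltonian path of the six-cycle, whose two endpoints are adjacent in that cycle and hence have distinct long partners, so they cannot both meet the one long edge. Normalizing as in Section~\ref{sec:moresheets} (longest short-run first, $13$ in sheet~$1$, $24$ in sheet~$2$, final edge $25$) then leaves a short explicit list of candidate anchors. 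For each, I would insert the two remaining edges into the sheets in every crossing-free way and apply the moves of Theorem~\ref{thm:moves} to collapse the candidates onto the seven-sheet representations recorded in Section~\ref{appendix}.

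The main obstacle is the equivalence bookkeeping in this last step: many distinct-looking fillings are related by chains of edge moves (Move~\ref{bump}), vertex exchanges (Move~\ref{vexchange}), double reflections (Move~\ref{mirror2}), and sheet shifts, and I must confirm that no two survivors are secretly equivalent. I would settle this exactly as in the lower-sheet theorems by computing, for each representation, the multiset of nontrivial links (Hopf $2^2_1$ and Solomon $4^2_1$) and knotted cycles (left and right trefoils and the figure-eight $4_1$). Since these ambient-isotopy invariants differ across the representations, and differ from every representation with at most six sheets, no two can coincide and none can be reduced below seven sheets; comparing the left- and right-handed trefoil counts within each representation determines which ones equal their mirror images, so that the chiral and achiral cases together total exactly ten.
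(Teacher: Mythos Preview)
Your overall strategy matches the paper's: reduce to at-most-two-edges-per-sheet configurations, build length-seven anchor sequences (the paper lists exactly $(13,24,35,46,15,36,25)$ and $(13,24,36,14,35,46,25)$ up to reflection), fill in the two extra edges, and separate the survivors by their knot and link content. Your observation that a length-seven anchor must contain at least two long edges is correct and slightly sharper than the paper's ``at least one,'' but it leads to the same enumeration.

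The genuine gap is in your last sentence. You claim that comparing the left- and right-handed trefoil counts within each representation determines which ones equal their mirror images. For four of the five representations this works, but for the representation the paper calls $7s2$ it fails: that embedding has three left trefoils ($(124635)$, $(135246)$, $(14635)$) and three right trefoils ($(136425)$, $(143625)$, $(13625)$), and its remaining invariants (five Hopf links, no Solomon link, no figure-eight) are all insensitive to mirroring. Your test would therefore declare $7s2$ achiral, and you would count $4\cdot 2 + 1 = 9$ representations rather than ten.

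The paper has to supply an extra argument here. It observes that in $7s2$ the edge $14$ lies in the five-cycle left trefoil $(14635)$ but in neither left-trefoil Hamiltonian cycle, whereas in the mirror $7s2^{*}$ every edge of the five-cycle left trefoil $(13625)$ appears in at least one left-trefoil Hamiltonian cycle. This incidence pattern between edges and knotted cycles is an ambient-isotopy invariant that distinguishes $7s2$ from $7s2^{*}$, giving the tenth representation. Your proposal needs some invariant of this finer type to close the argument.
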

\begin{proof}
Since every configuration with seven sheets has at most two edges per sheet, an anchor sequence exists.  The anchor sequence must contain at least one long edge, so by shifting sheets, rotating vertices, and reflecting, we may assume that the anchor sequence begins with edges $13$ and $24$ and ends with $25$.  Up to equivalence and reflection, the possible length seven anchors are $(13, 24, 35, 46, 15, 36, 25)$ and $(13, 24, 36, 14, 35, 46, 25)$.  After inserting the additional two edges in the anchor sequences and checking for equivalences, we obtain one of the five seven-sheet book representations in Section \ref{appendix} (or a mirror image).  No two of the seven-sheet representations have the same number of links and knots of each type; therefore, no pair can be equivalent.  The set of knots and links for each representation is also distinct from those obtained in the lower sheet representations, guaranteeing the sheet number cannot be reduced.  Each embedding except 7s2 has a different number of left and right handed trefoils, so is necessarily distinct from its mirror image.  A further justification is needed to show that 7s2 is distinct from its mirror image.

The embedding 7s2 and its mirror image 7s2* are shown below; both contain four knotted Hamiltonian cycles (two left and two right trefoils) and two knotted 5-cycles (one left and one right trefoil).
\[
\begin{tabular}{|l|l|l|}
        \hline
         & 7s2 & 7s2* \tabularnewline
        \hline
        \hline
        Sheet 1 & 13, 14 & 13, 14 \tabularnewline
        \hline
        Sheet 2 & 24, 26 & 25  \tabularnewline
        \hline
        Sheet 3 & 35 & 36 \tabularnewline
        \hline
        Sheet 4 & 46 & 15 \tabularnewline
        \hline
        Sheet 5 & 15 & 46 \tabularnewline
        \hline
        Sheet 6 & 36 & 35 \tabularnewline
        \hline
        Sheet 7 & 25 & 24, 26 \tabularnewline
        \hline
        Left trefoils & (124635) & (136425)\tabularnewline
        & (135246) & (143625) \tabularnewline
        & (14635) &  (13625)\tabularnewline
        \hline
        Right trefoils & (136425) & (124635) \tabularnewline
        & (143625) & (135246)\tabularnewline
        & (13625)  &  (14635)\tabularnewline
        \hline
\end{tabular}
\]
Note that edge $14$ in 7s2 is contained in the 5-cycle left-trefoil $(14635)$ but not in either of the left-trefoil Hamiltonian cycles; however, each edge in the 5-cycle left-trefoil $(13625)$ in 7s2* is contained in at least one of the left-trefoil Hamiltonian cycles.  Therefore, no ambient isotopy is possible between 7s2 and 7s2*, and the total number of distinct seven-sheet book representations is ten, as claimed.
\end{proof}

Note that every seven-sheet representation contains at least four pairs of linked 3-cycles and at least five knotted cycles.

\begin{theorem}
There are 12 distinct book representations of $K_6$ with sheet number eight.
\end{theorem}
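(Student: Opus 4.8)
The plan is to follow the pipeline already used for the five-, six-, and seven-sheet theorems: pin down the admissible edge distributions, reduce to representations built on an anchor sequence, enumerate the anchors, insert the leftover edge, and finally separate the survivors using knot and link data. I would begin by noting that for eight sheets the distribution is essentially forced. Since $K_6$ has nine interior edges and each of the eight sheets must hold between one and three edges, and since eight sheets already account for a minimum of eight edges, exactly one sheet carries two interior edges and the remaining seven carry one each; up to shifting sheets the configuration is $(2,1,1,1,1,1,1,1)$. In particular a three-edge sheet is impossible (a single three-edge sheet would leave only six edges for the other seven sheets), so Lemma \ref{lem:no3sheet} is not even needed: every eight-sheet representation already has at most two edges per sheet, and by Lemma \ref{lem:anchorbuild} it either reduces to fewer sheets or contains an anchor sequence of length eight.

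Next I would enumerate the candidate anchors. An anchor of length eight uses eight of the nine interior edges, hence omits exactly one. Using the rules that the blocks $LSL$ and $LSSSSL$ cannot occur, together with the normalization by shifting, rotating, and reflecting so that the sequence begins $13, 24$ and ends with the long edge $25$, I expect only a small number of admissible length-eight anchors, as in the six- and seven-sheet cases. Because the anchor already fixes one edge in every sheet, the only remaining freedom is where to place the single omitted edge: it must be inserted, as a second edge, into one of the eight sheets whose anchoring edge it does not cross. Running over all anchors and all valid insertions produces the complete list of candidate $(2,1,1,1,1,1,1,1)$ representations.

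I would then apply the moves of Theorem \ref{thm:moves} to this candidate list, discarding those that reduce below eight sheets and collapsing equivalent candidates, expecting to arrive at the twelve representations recorded in Section \ref{appendix} together with their mirror images. To show these twelve are genuinely distinct, and distinct from every lower-sheet representation, I would tabulate for each the number of Hopf links, Solomon links ($4_1^2$), figure-eight knots ($4_1$), and left- and right-handed trefoils. Wherever these multisets of invariants already differ, distinctness is immediate, and the same data simultaneously certify that the sheet number cannot be lowered, since no lower-sheet representation realizes the same counts.

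The main obstacle I anticipate is precisely the point that required extra care in the seven-sheet theorem: a representation may share all of its link and knot counts with its mirror image, as happened for \textrm{7s2}, so the coarse invariants cannot by themselves certify chirality or inequivalence. For any such ambiguous pair I would fall back on the finer, incidence-type invariant used there, recording which edges lie on which knotted cycles, for instance whether every edge of a given knotted cycle also lies on a knotted Hamiltonian cycle of the same handedness. This combinatorial incidence pattern is preserved by ambient isotopy and distinguishes a representation from its mirror. Verifying that this finer invariant resolves every remaining ambiguous eight-sheet case, and thereby confirms that exactly twelve inequivalent representations occur, is the crux of the argument.
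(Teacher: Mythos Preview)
Your proposal is correct and follows essentially the same pipeline as the paper's proof: forcing the $(2,1,1,1,1,1,1,1)$ distribution, building on a length-eight anchor that omits a single edge, inserting that edge, collapsing via the moves of Theorem~\ref{thm:moves}, and separating the survivors by their knot and link content. The one place you are more cautious than necessary is the chirality step: you anticipate having to fall back on the edge--cycle incidence invariant used for $7s2$, but in the eight-sheet case this never arises, since each of the six representations in Section~\ref{appendix} has unequal counts of left- and right-handed trefoils, which already distinguishes every representation from its mirror.
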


\begin{proof}
An anchor sequence of length eight omits exactly one edge.   Up to equivalence and reflection, the possible length eight anchors are:
\[(13, 24, 35, 14, 26, 15, 46, 25)\]\[(13, 24, 35, 14, 36, 15, 46, 25)\]\[(13, 24, 35, 46, 15, 26, 14, 25)\] \[(13, 24, 35, 46, 15, 36, 14, 25)\]
Inserting the missing edge in all possible ways and checking for equivalences results in one of the six eight-sheet representations listed in Section \ref{appendix}, or a mirror image.  No two of the eight-sheet representations have the same number of links and knots of each type; therefore, no pair can be equivalent.  The set of knots and links for each representation is also distinct from those obtained in the lower sheet representations, guaranteeing the sheet number cannot be reduced.  Furthermore, in each representation listed the number of cycles which are right handed trefoils is different from the number of cycles which are left-handed trefoils--so it is impossible for any eight-sheet representation to be equivalent to is mirror image.  Therefore, there are exactly 12 distinct eight-sheet book representations, as claimed.
\end{proof}

\begin{theorem}
There are four distinct book representations of $K_6$ with sheet number nine.
\end{theorem}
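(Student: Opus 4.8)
The plan is to push the maximal-sheet case to its extreme. Since $K_6$ has exactly nine interior edges and the maximal sheet number equals the number of interior edges, every $9$-sheet book representation assigns exactly one interior edge to each sheet. First I would observe that such a representation actually has sheet number nine (rather than fewer) precisely when no edge can be bumped into a neighbouring sheet by Move \ref{bump}; since each sheet holds a single edge and the sheets are cyclically ordered under Move \ref{shift}, this is equivalent to requiring that the nine edges, read in sheet order, form a closed anchor sequence of length nine, i.e. each edge crosses both of its cyclic neighbours. (One also checks that vertex exchange, Move \ref{vexchange}, does not apply to these configurations, so the sheet number genuinely cannot be lowered.) Thus, up to Move \ref{shift}, the $9$-sheet representations correspond exactly to the cyclic anchor sequences of length nine.

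Next I would enumerate these cyclic anchors from the crossing structure of the interior edges. The six short edges cross in a single hexagon $13$--$24$--$35$--$46$--$15$--$26$, the three long edges $14,25,36$ cross pairwise (a triangle), and each long edge crosses exactly the two short edges that are antipodal on the hexagon ($14$ crosses $35,26$; $25$ crosses $13,46$; $36$ crosses $24,15$). A cyclic anchor is then a Hamiltonian cycle in this crossing graph, and I would classify it by the number of long--long crossings it uses. Three is impossible, since it would close a triangle; a short case-check rules out exactly one, since it forces either a disconnection into smaller cycles or a short edge of degree three; so only zero or two occur. With zero long--long crossings each long edge uses both short neighbours and the hexagon contributes a perfect matching, giving the anchors in which the three long edges are mutually non-adjacent (pattern $SSLSSLSSL$); with two, the three long edges appear consecutively and the remaining short edges trace a Hamiltonian path of the hexagon whose endpoints must be hexagon-adjacent (pattern $LLLSSSSSS$). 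This reproduces the length-nine anchor list already tabulated in the proof of Lemma \ref{lem:5anchor}.

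I would then reduce modulo the equivalences of Theorem \ref{thm:moves}. Rotating vertices (Move \ref{rotate}), together with double reflections (Move \ref{mirror2}), collapses each of the two anchor patterns to a single orbit, so there are at most two representations up to ambient isotopy \emph{and} mirror image. The crucial point is that reversing the sheet order is a single reflection, and Move \ref{mirror2} is an isotopy only when two reflections are composed; so reversal produces the mirror image, and I would verify that no honest isotopy in the group generated by Moves \ref{rotate}, \ref{shift}, and \ref{mirror2} carries either family to its own reverse. Hence each pattern yields a chiral pair, for four candidate representations 9s1--9s4. To confirm these four are pairwise non-isotopic I would appeal to the knot/link data in Section \ref{appendix}: the two patterns are separated by their link and knot content (numbers of Hopf and Solomon links and of figure-eight versus trefoil cycles), while within each pattern the two mirror images are separated by comparing the numbers of left-handed and right-handed trefoil cycles.

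The main obstacle is the chirality bookkeeping in the reduction step: showing that sheet-order reversal is never reproduced by a genuine ambient isotopy, so that each anchor pattern really splits into two distinct representations rather than collapsing to one. Equivalently, the crux is to certify, via the left/right trefoil counts recorded in Section \ref{appendix}, that 9s1--9s4 are four genuinely distinct embeddings, none of which can be reduced below nine sheets.
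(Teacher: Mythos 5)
Your proposal is correct and follows essentially the same skeleton as the paper's proof: one interior edge per sheet, classification by (cyclic) anchor sequences of length nine, reduction to two classes up to reflection, and then the knot/link data of Section \ref{appendix} to certify chirality (via the unequal left/right trefoil counts) and to rule out any reduction of the sheet number. The one point where you add substance is the enumeration step: the paper simply asserts there are only two anchor sequences up to reflection (implicitly leaning on the brute-force table in the proof of Lemma \ref{lem:5anchor}), whereas you derive it structurally from the crossing graph---the hexagon of short edges, the triangle of long edges, and the antipodal long--short incidences---by counting long--long adjacencies in a Hamiltonian cycle; this check is sound (the one-adjacency case does force either a disconnecting $4$-cycle such as $14$--$25$--$46$--$35$ or an overloaded short edge) and reproduces exactly the paper's two patterns $SSSSSSLLL$ and $(SSL)^3$.
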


\begin{proof}
In order to construct a book representation with maximal sheet number, each sheet must contain one internal edge.  Each anchor sequence of length nine generates a book representation of $K_6$ with nine sheets.  There are only two non-equivalent anchor sequences, up to reflection:
\[(13, 24, 35, 46, 15, 26, 14, 36, 25)\] and \[(13, 24, 36, 15, 26, 14, 35, 46, 25).\]
These and their mirror images are the four non-equivalent book representations.  Because neither contains the same number of right trefoils as left trefoils, neither is equivalent to its mirror image.  Furthermore, each contains a different number of links and knots of each type from each other and from the lower sheet representations.  This establishes that there are four distinct book representations as claimed.
\end{proof}

\section{Table of Links and Knots in Book Representations of $K_6$}
\label{appendix}
This section contains a list of the book representations of $K_6$, up to mirror image, and lists the knotted cycles and linked 3-cycles that can be found in each.  The 3-sheet representation is achiral; all others are distinct from their mirror images.

The numbering convention lists the number of sheets, followed by a ranking of book representations ordered by number of links, then by number of knotted cycles; so $K_6: nsm$ denotes the $m$th  $n$-sheet book representation of $K_6$.

The knotted cycles were identified via their Dowker-Thistlethwaite notation using KnotScape together with code written by David Toth, Michael Walton, and and the author \cite{knotscape, toth}.  The handedness of the trefoils and the linked cycles were determined by hand by the author.

\subsection*{3 sheets} 

\noindent $K_6:3s1$ \vspace{1 em}

  \begin{minipage}[b]{0.35\linewidth}
   \begin{tabular}{|l|l|}
        \hline
        Sheet Number & Edges\tabularnewline
        \hline
        \hline
        Sheet 1 & 13, 14, 46\tabularnewline
        \hline
        Sheet 2 & 15, 24, 25\tabularnewline
        \hline
        Sheet 3 & 26, 35, 36\tabularnewline
        \hline
        \end{tabular}
    \par\vspace{0pt}
  \end{minipage}
  \begin{minipage}[b]{0.60\linewidth}
    \begin{tabular}{|l|}
        \hline
        1 Hopf link\tabularnewline
        \hline
        (135)(246)\tabularnewline
        \hline
        \end{tabular}
    \par\vspace{0pt}
  \end{minipage}%

\subsection*{4 sheets} 

\noindent $K_6:4s1$
\vspace{1 em}

 \begin{minipage}[b]{0.35\linewidth}
  \begin{tabular}{|l|l|}
        \hline
        Sheet Number & Edges\tabularnewline
        \hline
        \hline
        Sheet 1 & 13, 14, 46\tabularnewline
        \hline
        Sheet 2 & 15, 24\tabularnewline
        \hline
        Sheet 3 & 26, 36\tabularnewline
        \hline
        Sheet 4 & 25, 35\tabularnewline
        \hline
        \end{tabular}
    \par\vspace{0pt}
  \end{minipage}
  \begin{minipage}[b]{0.60\linewidth}
    \begin{tabular}{|l|l|}
        \hline
        3 Hopf links & 1 trefoil \tabularnewline
        \hline
        (125)(346) & (136425)R \tabularnewline

        (135)(246) & \tabularnewline

        (136)(245) & \tabularnewline
        \hline
        \end{tabular}
    \par\vspace{0pt}
  \end{minipage}

  \subsection*{5 sheets}  
\noindent $K_6:5s1$ \vspace{1 em}

 \begin{minipage}[b]{0.35\linewidth}
  \begin{tabular}{|l|l|}
        \hline
        Sheet Number & Edges\tabularnewline
        \hline
        \hline
        Sheet 1 & 13, 36, 46\tabularnewline
        \hline
        Sheet 2 & 24, 15\tabularnewline
        \hline
        Sheet 3 & 35 \tabularnewline
        \hline
        Sheet 4 & 14\tabularnewline
        \hline
        Sheet 5 & 25, 26\tabularnewline
        \hline
        \end{tabular}
    \par\vspace{0pt}
  \end{minipage}
  \begin{minipage}[b]{0.60\linewidth}
    \begin{tabular}{|l|l|}
        \hline
        3 Hopf links & 3 trefoils \tabularnewline
        \hline
        (124)(356) &  (135624)L \tabularnewline
        (135)(246) &  (142536)L \tabularnewline \cline{2-2}
        (146)(235) &  (13524)L  \tabularnewline
        \hline
        \end{tabular}
    \par\vspace{0pt}
  \end{minipage}

\vspace{1 em}
\noindent $K_6:5s2$

\vspace{1 em}
 \begin{minipage}[b]{0.35\linewidth}
\begin{tabular}{|l|l|}
        \hline
        Sheet Number & Edges\tabularnewline
        \hline
        \hline
        Sheet 1 & 13, 14\tabularnewline
        \hline
        Sheet 2 & 24, 26\tabularnewline
        \hline
        Sheet 3 & 15, 35\tabularnewline
        \hline
        Sheet 4 & 36, 46\tabularnewline
        \hline
        Sheet 5 & 25    \tabularnewline
        \hline
\end{tabular}
    \par\vspace{0pt}
  \end{minipage}
  \begin{minipage}[b]{0.60\linewidth}
    \begin{tabular}{|l|l|}
        \hline
        3 Hopf links &  5 trefoils\tabularnewline
        \hline
       (136)(245) &  (134625)R \tabularnewline
       (145)(236) &(135246)L \tabularnewline
       (146)(235) &(143625)R \tabularnewline \cline{2-2}
                &(13625)R \tabularnewline
                &(14625)R \tabularnewline
        \hline
        \end{tabular}
    \par\vspace{0pt}
  \end{minipage}
\vspace{1 em}

\noindent $K_6:5s3$
\vspace{1 em}

 \begin{minipage}[b]{0.35\linewidth}
\begin{tabular}{|l|l|}
        \hline
        Sheet Number & Edges\tabularnewline
        \hline
        \hline
        Sheet 1 & 13, 36\tabularnewline
        \hline
        Sheet 2 & 15, 24\tabularnewline
        \hline
        Sheet 3 & 35\tabularnewline
        \hline
        Sheet 4 & 14, 46\tabularnewline
        \hline
        Sheet 5 & 25, 26 \tabularnewline
        \hline
\end{tabular}
    \par\vspace{0pt}
  \end{minipage}
  \begin{minipage}[b]{0.60\linewidth}
    \begin{tabular}{|l|l|}
        \hline
        3 Hopf links & 5 trefoils \tabularnewline
        \hline
        (124)(356) & (135246)L \tabularnewline
        (125)(346) & (135624)L \tabularnewline
        (135)(246) & (142536)L \tabularnewline \cline{2-2}
                    & (13524)L \tabularnewline
                    & (24635)L \tabularnewline
        \hline
        \end{tabular}
    \par\vspace{0pt}
  \end{minipage}
\vspace{1 em}

  \noindent $K_6:5s4$
\vspace{1 em}

 \begin{minipage}[b]{0.35\linewidth}
\begin{tabular}{|l|l|}
        \hline
        Sheet Number & Edges\tabularnewline
        \hline
        \hline
        Sheet 1 & 13, 36\tabularnewline
        \hline
        Sheet 2 & 15, 24\tabularnewline
        \hline
        Sheet 3 & 35, 26\tabularnewline
        \hline
        Sheet 4 & 14, 46\tabularnewline
        \hline
        Sheet 5 & 25 \tabularnewline
        \hline
\end{tabular}
    \par\vspace{0pt}
  \end{minipage}
  \begin{minipage}[b]{0.60\linewidth}
    \begin{tabular}{|l|l|}
        \hline
        5 Hopf links & 6 trefoils \tabularnewline
        \hline
        (124)(356) & (135246)L \tabularnewline
        (125)(346) & (142536)L \tabularnewline
        (134)(256) & (142635)R \tabularnewline
        (135)(246) & (143625)L \tabularnewline \cline{2-2}
        (145)(236) & (13524)L \tabularnewline
                    & (24635)L \tabularnewline
        \hline
        \end{tabular}
    \par\vspace{0pt}
  \end{minipage}
\vspace{1 em}

  \noindent $K_6:5s5$
\vspace{1 em}

\begin{minipage}[b]{0.35\linewidth}
\begin{tabular}{|l|l|}
        \hline
        Sheet Number & Edges\tabularnewline
        \hline
        \hline
        Sheet 1 & 13, 36, 46\tabularnewline
        \hline
        Sheet 2 & 15, 24\tabularnewline
        \hline
        Sheet 3 & 26, 35\tabularnewline
        \hline
        Sheet 4 & 14\tabularnewline
        \hline
        Sheet 5 & 25    \tabularnewline
        \hline
\end{tabular}
    \par\vspace{0pt}
  \end{minipage}
  \begin{minipage}[b]{0.60\linewidth}
    \begin{tabular}{|l|l|}
        \hline
        5 Hopf links & 6 trefoils \tabularnewline
        \hline
        (124)(356) & (135264)L \tabularnewline
        (134)(256) & (142536)L \tabularnewline
        (135)(246) & (142635)R \tabularnewline
        (145)(236) & (143625)L \tabularnewline \cline{2-2}
        (146)(235) & (13524)L \tabularnewline
                    & (14625)L \tabularnewline
        \hline
        \end{tabular}
    \par\vspace{0pt}
  \end{minipage}

\subsection*{6 sheets}  
  \noindent $K_6:6s1$
\vspace{1 em}

\begin{minipage}[b]{0.35\linewidth}
\begin{tabular}{|l|l|}
        \hline
        Sheet Number & Edges\tabularnewline
        \hline
        \hline
        Sheet 1 & 13, 14 \tabularnewline
        \hline
        Sheet 2 & 24, 25  \tabularnewline
        \hline
        Sheet 3 & 35, 36 \tabularnewline
        \hline
        Sheet 4 & 46 \tabularnewline
        \hline
        Sheet 5 & 15 \tabularnewline
        \hline
        Sheet 6 & 26 \tabularnewline
        \hline
\end{tabular}
    \par\vspace{0pt}
  \end{minipage}
  \begin{minipage}[b]{0.60\linewidth}
    \begin{tabular}{|l|l|l|}
        \hline
        1 Hopf link & 1 Solomon & 6 trefoils \tabularnewline
        \hline
        (145)(236)& (135)(246) & (134625)L \tabularnewline
        &   &   (136245)L \tabularnewline
        &   &   (143625)L \tabularnewline
        &   &   (146235)L \tabularnewline
        &   &   (13625)L   \tabularnewline \cline{3-3}
        &   &   (14625)L    \tabularnewline
        \hline
        \end{tabular}
    \par\vspace{0pt}
  \end{minipage}
\vspace{1 em}

\noindent $K_6:6s2$
\vspace{1 em}

\begin{minipage}[b]{0.35\linewidth}
\begin{tabular}{|l|l|}
        \hline
        Sheet Number & Edges\tabularnewline
        \hline
        \hline
        Sheet 1 & 13, 14 \tabularnewline
        \hline
        Sheet 2 & 24, 26 \tabularnewline
        \hline
        Sheet 3 & 35, 36 \tabularnewline
        \hline
        Sheet 4 & 15 \tabularnewline
        \hline
        Sheet 5 & 46 \tabularnewline
        \hline
        Sheet 6 & 25 \tabularnewline
        \hline
\end{tabular}
    \par\vspace{0pt}
  \end{minipage}
  \begin{minipage}[b]{0.60\linewidth}
    \begin{tabular}{|l|l|l|}
        \hline
        3 Hopf links &  3 trefoils & 1 figure-eight \tabularnewline
        \hline
        (125)(346) & (134625)R & (136425) \tabularnewline
        (136)(245) & (135246)L & \tabularnewline
        (146)(235) & (146325)R & \tabularnewline
        \hline
        \end{tabular}
    \par\vspace{0pt}
  \end{minipage}
\vspace{1 em}

  \noindent $K_6:6s3$
\vspace{1 em}

\begin{minipage}[b]{0.35\linewidth}
\begin{tabular}{|l|l|}
        \hline
        Sheet Number & Edges\tabularnewline
        \hline
        \hline
        Sheet 1 & 13, 14 \tabularnewline
        \hline
        Sheet 2 & 24 \tabularnewline
        \hline
        Sheet 3 & 36 \tabularnewline
        \hline
        Sheet 4 & 15 \tabularnewline
        \hline
        Sheet 5 & 26, 46 \tabularnewline
        \hline
        Sheet 6 & 25, 35 \tabularnewline
        \hline
\end{tabular}
    \par\vspace{0pt}
  \end{minipage}
  \begin{minipage}[b]{0.60\linewidth}
    \begin{tabular}{|l|l|l|}
        \hline
         3 Hopf links &  4 trefoils & 1 figure-eight \tabularnewline
        \hline
        (125)(346) & (124635)R & (136425) \tabularnewline
        (136)(245) & (136245)L &\tabularnewline
        (145)(236) & (146325)R &\tabularnewline
        & (14635)R  & \tabularnewline \cline{2-2}
        \hline
        \end{tabular}
    \par\vspace{0pt}
  \end{minipage}
\vspace{1 em}

\noindent $K_6:6s4$ 
\vspace{1 em}

\begin{minipage}[b]{0.35\linewidth}
\begin{tabular}{|l|l|}
        \hline
        Sheet Number & Edges\tabularnewline
        \hline
        \hline
        Sheet 1 & 13 \tabularnewline
        \hline
        Sheet 2 & 24, 26  \tabularnewline
        \hline
        Sheet 3 & 35  \tabularnewline
        \hline
        Sheet 4 & 14, 15 \tabularnewline
        \hline
        Sheet 5 & 36, 46 \tabularnewline
        \hline
        Sheet 6 & 25 \tabularnewline
        \hline
\end{tabular}
    \par\vspace{0pt}
  \end{minipage}
  \begin{minipage}[b]{0.60\linewidth}
    \begin{tabular}{|l|l|}
        \hline
        3 Hopf links & 5 trefoils \tabularnewline
        \hline
        (124)(356) & (134625)R \tabularnewline
        (134)(256) & (135246)L \tabularnewline
        (136)(245) & (136254)R \tabularnewline  \cline{2-2}
        & (13524)L \tabularnewline
        & (13625)R \tabularnewline
        \hline
        \end{tabular}
    \par\vspace{0pt}
  \end{minipage}
\vspace{1 em}

  \noindent $K_6:6s5$  
\vspace{1 em}

\begin{minipage}[b]{0.35\linewidth}
\begin{tabular}{|l|l|}
        \hline
        Sheet Number & Edges\tabularnewline
        \hline
        \hline
        Sheet 1 & 13, 14 \tabularnewline
        \hline
        Sheet 2 & 24, 26 \tabularnewline
        \hline
        Sheet 3 & 36 \tabularnewline
        \hline
        Sheet 4 & 15 \tabularnewline
        \hline
        Sheet 5 & 46 \tabularnewline
        \hline
        Sheet 6 & 25, 35 \tabularnewline
        \hline
\end{tabular}
    \par\vspace{0pt}
  \end{minipage}
  \begin{minipage}[b]{0.60\linewidth}
    \begin{tabular}{|l|l|l|}
        \hline
        3 Hopf links & 6 trefoils & 1 figure-eight \tabularnewline
        \hline
        (125)(346) & (124635)R & (136425)\tabularnewline
        (135)(246) & (134625)R & \tabularnewline
        (136)(245) & (146235)R & \tabularnewline
        & (146325)R &\tabularnewline \cline{2-2}
        & (14625)R & \tabularnewline
        & (14635)R & \tabularnewline
        \hline
        \end{tabular}
    \par\vspace{0pt}
  \end{minipage}
\vspace{1 em}

  \noindent $K_6:6s6$
\vspace{1 em}

\begin{minipage}[b]{0.35\linewidth}
\begin{tabular}{|l|l|}
        \hline
        Sheet Number & Edges\tabularnewline
        \hline
        \hline
        Sheet 1 & 13, 14 \tabularnewline
        \hline
        Sheet 2 & 24 \tabularnewline
        \hline
        Sheet 3 & 35, 36 \tabularnewline
        \hline
        Sheet 4 & 46 \tabularnewline
        \hline
        Sheet 5 & 15, 25 \tabularnewline
        \hline
        Sheet 6 & 26 \tabularnewline
        \hline
\end{tabular}
    \par\vspace{0pt}
  \end{minipage}
  \begin{minipage}[b]{0.60\linewidth}
    \begin{tabular}{|l|l|l|}
        \hline
        3 Hopf links & 1 Solomon & 3 trefoils \tabularnewline
        \hline
        (136)(245) & (135)(246) & (135246)L \tabularnewline
        (145)(236) &            & (136245)L \tabularnewline
        (146)(235) &            & (146235)L \tabularnewline
        \hline
        \end{tabular}
    \par\vspace{0pt}
  \end{minipage}
\vspace{1 em}

\noindent $K_6:6s7$
\vspace{1 em}

\begin{minipage}[b]{0.35\linewidth}
\begin{tabular}{|l|l|}
        \hline
        Sheet Number & Edges\tabularnewline
        \hline
        \hline
        Sheet 1 & 13, 46 \tabularnewline
        \hline
        Sheet 2 & 24  \tabularnewline
        \hline
        Sheet 3 & 15, 35 \tabularnewline
        \hline
        Sheet 4 & 14 \tabularnewline
        \hline
        Sheet 5 & 26, 36 \tabularnewline
        \hline
        Sheet 6 & 25 \tabularnewline
        \hline
\end{tabular}
    \par\vspace{0pt}
  \end{minipage}
  \begin{minipage}[b]{0.60\linewidth}
    \begin{tabular}{|l|l|}
        \hline
        5 Hopf links & 4 trefoils \tabularnewline
        \hline
        (124)(356) & (125364)R \tabularnewline
        (125)(346) & (135624)L \tabularnewline
        (135)(246) & (136425)R \tabularnewline \cline{2-2}
        (136)(245) & (13524)L  \tabularnewline
        (146)(235) & \tabularnewline
        \hline
        \end{tabular}
    \par\vspace{0pt}
  \end{minipage}
\vspace{1 em}

  \noindent $K_6:6s8$ 
\vspace{1 em}

\begin{minipage}[b]{0.35\linewidth}
\begin{tabular}{|l|l|}
        \hline
        Sheet Number & Edges\tabularnewline
        \hline
        \hline
        Sheet 1 & 13, 46 \tabularnewline
        \hline
        Sheet 2 & 24, 26  \tabularnewline
        \hline
        Sheet 3 & 15, 35 \tabularnewline
        \hline
        Sheet 4 & 14 \tabularnewline
        \hline
        Sheet 5 & 36 \tabularnewline
        \hline
        Sheet 6 & 25 \tabularnewline
        \hline
\end{tabular}
    \par\vspace{0pt}
  \end{minipage}
  \begin{minipage}[b]{0.60\linewidth}
    \begin{tabular}{|l|l|}
        \hline
        5 Hopf links & 4 trefoils \tabularnewline
        \hline
        (124)(356) & (125364)R \tabularnewline
        (125)(346) & (135264)L \tabularnewline
        (134)(256) & (136254)R \tabularnewline
        (136)(245) & (136425)R \tabularnewline
        (146)(235) & \tabularnewline
        & \tabularnewline
        \hline
        \end{tabular}
    \par\vspace{0pt}
  \end{minipage}
\vspace{1 em}

  \noindent $K_6:6s9$ 
\vspace{1 em}

\begin{minipage}[b]{0.35\linewidth}
\begin{tabular}{|l|l|}
        \hline
        Sheet Number & Edges\tabularnewline
        \hline
        \hline
        Sheet 1 & 13, 14 \tabularnewline
        \hline
        Sheet 2 & 24 \tabularnewline
        \hline
        Sheet 3 & 35, 36 \tabularnewline
        \hline
        Sheet 4 & 15 \tabularnewline
        \hline
        Sheet 5 & 26, 46 \tabularnewline
        \hline
        Sheet 6 & 25 \tabularnewline
        \hline
\end{tabular}
    \par\vspace{0pt}
  \end{minipage}
  \begin{minipage}[b]{0.60\linewidth}
    \begin{tabular}{|l|l|l|}
        \hline
        5 Hopf links & 3 trefoils & 1 figure-eight \tabularnewline
        \hline
        (125)(346) & (135246)L & (136425)\tabularnewline
        (135)(246) & (136245)L & \tabularnewline
        (136)(245) & (146325)R & \tabularnewline
        (145)(236) & &\tabularnewline
        (146)(235) & &\tabularnewline
        \hline
        \end{tabular}
    \par\vspace{0pt}
  \end{minipage}
\vspace{1 em}

  \noindent $K_6:6s10$ 
\vspace{1 em}

\begin{minipage}[b]{0.35\linewidth}
\begin{tabular}{|l|l|}
        \hline
        Sheet Number & Edges\tabularnewline
        \hline
        \hline
        Sheet 1 & 13, 46 \tabularnewline
        \hline
        Sheet 2 & 15, 24  \tabularnewline
        \hline
        Sheet 3 & 26, 35 \tabularnewline
        \hline
        Sheet 4 & 14 \tabularnewline
        \hline
        Sheet 5 & 36 \tabularnewline
        \hline
        Sheet 6 & 25 \tabularnewline
        \hline
\end{tabular}
    \par\vspace{0pt}
  \end{minipage}
  \begin{minipage}[b]{0.60\linewidth}
    \begin{tabular}{|l|l|}
        \hline
        7 Hopf links & 7 trefoils \tabularnewline
        \hline
        (124)(356)& (125364)R \tabularnewline
        (125)(346)& (135264)L \tabularnewline
        (134)(256)& (136254)R \tabularnewline
        (135)(246)& (136425)R \tabularnewline
        (136)(245)& (142635)R \tabularnewline \cline{2-2}
        (145)(236)& (13524)L \tabularnewline
        (146)(235)& (14625)L \tabularnewline
        \hline
        \end{tabular}
    \par\vspace{0pt}
  \end{minipage}

  \subsection*{7 sheets}  

\noindent $K_6:7s1$  
\vspace{1 em}

\begin{minipage}[b]{0.35\linewidth}
\begin{tabular}{|l|l|}
        \hline
        Sheet Number & Edges\tabularnewline
        \hline
        \hline
        Sheet 1 & 13, 14 \tabularnewline
        \hline
        Sheet 2 & 24 \tabularnewline
        \hline
        Sheet 3 & 35 \tabularnewline
        \hline
        Sheet 4 & 46 \tabularnewline
        \hline
        Sheet 5 & 15 \tabularnewline
        \hline
        Sheet 6 & 26, 36 \tabularnewline
        \hline
        Sheet 7 & 25 \tabularnewline
        \hline
\end{tabular}
    \par\vspace{0pt}
  \end{minipage}
  \begin{minipage}[b]{0.60\linewidth}
    \begin{tabular}{|l|l|l|}
        \hline
        3 Hopf links & 1 Solomon & 5 trefoils \tabularnewline
        \hline
        (125)(346) & (135)(246) & (124635)L \tabularnewline
        (136)(245) &  & (135246)L \tabularnewline
        (146)(235) & & (136425)R \tabularnewline
        & & (146235)L \tabularnewline \cline{3-3}
        & & (14635)L \tabularnewline
        \hline
        \end{tabular}
    \par\vspace{0pt}
  \end{minipage}
\vspace{1 em}

\noindent $K_6:7s2$ 
\vspace{1 em}

\begin{minipage}[b]{0.35\linewidth}
\begin{tabular}{|l|l|}
        \hline
        Sheet Number & Edges\tabularnewline
        \hline
        \hline
        Sheet 1 & 13, 14 \tabularnewline
        \hline
        Sheet 2 & 24, 26  \tabularnewline
        \hline
        Sheet 3 & 35 \tabularnewline
        \hline
        Sheet 4 & 46 \tabularnewline
        \hline
        Sheet 5 & 15 \tabularnewline
        \hline
        Sheet 6 & 36 \tabularnewline
        \hline
        Sheet 7 & 25 \tabularnewline
        \hline
\end{tabular}
    \par\vspace{0pt}
  \end{minipage}
  \begin{minipage}[b]{0.60\linewidth}
    \begin{tabular}{|l|l|}
        \hline
         5 Hopf links & 6 trefoils \tabularnewline
        \hline
        (125)(346) & (124635)L \tabularnewline
        (135)(246) & (135246)L \tabularnewline
        (136)(245) & (136425)R \tabularnewline
        (145)(236) & (143625)R \tabularnewline \cline{2-2}
        (146)(235) & (13625)R \tabularnewline
         & (14635)L \tabularnewline
        \hline
        \end{tabular}
    \par\vspace{0pt}
  \end{minipage}
\vspace{1 em}

  \noindent $K_6:7s3$ 
\vspace{1 em}

\begin{minipage}[b]{0.35\linewidth}
\begin{tabular}{|l|l|}
        \hline
        Sheet Number & Edges\tabularnewline
        \hline
        \hline
        Sheet 1 & 13, 15 \tabularnewline
        \hline
        Sheet 2 & 24\tabularnewline
        \hline
        Sheet 3 & 36 \tabularnewline
        \hline
        Sheet 4 & 14 \tabularnewline
        \hline
        Sheet 5 & 26, 35 \tabularnewline
        \hline
        Sheet 6 & 46 \tabularnewline
        \hline
        Sheet 7 & 25 \tabularnewline
        \hline
\end{tabular}
    \par\vspace{0pt}
  \end{minipage}
  \begin{minipage}[b]{0.60\linewidth}
    \begin{tabular}{|l|l|}
        \hline
        5 Hopf links & 6 trefoils \tabularnewline
        \hline
        (124)(356) & (125364)L \tabularnewline
        (135)(246) & (135246)L \tabularnewline
        (136)(245) & (136524)L \tabularnewline
        (145)(236) & (142635)L \tabularnewline \cline{2-2}
        (146)(235) & (13624)L \tabularnewline
         & (14635)L \tabularnewline
        \hline
        \end{tabular}
    \par\vspace{0pt}
  \end{minipage}
\vspace{1 em}

  \noindent $K_6:7s4$  
\vspace{1 em}

\begin{minipage}[b]{0.35\linewidth}
\begin{tabular}{|l|l|}
        \hline
        Sheet Number & Edges\tabularnewline
        \hline
        \hline
        Sheet 1 & 13, 35 \tabularnewline
        \hline
        Sheet 2 & 24, 46  \tabularnewline
        \hline
        Sheet 3 & 36 \tabularnewline
        \hline
        Sheet 4 & 15 \tabularnewline
        \hline
        Sheet 5 & 26 \tabularnewline
        \hline
        Sheet 6 & 14 \tabularnewline
        \hline
        Sheet 7 & 25 \tabularnewline
        \hline
\end{tabular}
    \par\vspace{0pt}
  \end{minipage}
  \begin{minipage}[b]{0.60\linewidth}
    \begin{tabular}{|l|l|l|}
        \hline
         5 Hopf links & 4 trefoils & 1 figure-eight \tabularnewline
        \hline
        (124)(356) & (136245)L & (142635)\tabularnewline
        (134)(256) & (136524)L & \tabularnewline
        (135)(246) & (143625)L & \tabularnewline
        (136)(245) & (146235)L & \tabularnewline
        (145)(236) & & \tabularnewline
        \hline
        \end{tabular}
    \par\vspace{0pt}
  \end{minipage}
\vspace{1 em}

  \noindent $K_6:7s5$  
\vspace{1 em}

\begin{minipage}[b]{0.35\linewidth}
\begin{tabular}{|l|l|}
        \hline
        Sheet Number & Edges\tabularnewline
        \hline
        \hline
        Sheet 1 & 13, 35 \tabularnewline
        \hline
        Sheet 2 & 24 \tabularnewline
        \hline
        Sheet 3 & 36 \tabularnewline
        \hline
        Sheet 4 & 15 \tabularnewline
        \hline
        Sheet 5 & 26, 46 \tabularnewline
        \hline
        Sheet 6 & 14 \tabularnewline
        \hline
        Sheet 7 & 25 \tabularnewline
        \hline
\end{tabular}
    \par\vspace{0pt}
  \end{minipage}
  \begin{minipage}[b]{0.60\linewidth}
    \begin{tabular}{|l|l|l|}
        \hline
        5 Hopf links & 4 trefoils & 2 figure-eights \tabularnewline
        \hline
        (124)(356) & (124635)R & (136425) \tabularnewline
        (125)(346) & (136245)L & (142635) \tabularnewline
        (134)(256) & (136524)L & \tabularnewline
        (136)(245) & (143625)L & \tabularnewline
        (145)(236) & &\tabularnewline
        \hline
        \end{tabular}
    \par\vspace{0pt}
  \end{minipage}

  \subsection*{8 sheets}  

  \noindent $K_6:8s1$ 
\vspace{1 em}

\begin{minipage}[b]{0.35\linewidth}
\begin{tabular}{|l|l|}
        \hline
        Sheet Number & Edges\tabularnewline
        \hline
        \hline
        Sheet 1 & 13, 36 \tabularnewline
        \hline
        Sheet 2 & 24 \tabularnewline
        \hline
        Sheet 3 & 35 \tabularnewline
        \hline
        Sheet 4 & 14 \tabularnewline
        \hline
        Sheet 5 & 26 \tabularnewline
        \hline
        Sheet 6 & 15 \tabularnewline
        \hline
        Sheet 7 & 46 \tabularnewline
        \hline
        Sheet 8 & 25 \tabularnewline
        \hline
\end{tabular}
    \par\vspace{0pt}
  \end{minipage}
  \begin{minipage}[b]{0.60\linewidth}
    \begin{tabular}{|l|l|l|}
        \hline
        3 Hopf links & 8 trefoils & 1 figure-eight \tabularnewline
        \hline
        (124)(356) & (134625)R & (142635) \tabularnewline
        (125)(346) & (135246)L & \tabularnewline
        (145)(236) & (135624)L & \tabularnewline
        & (142536)L & \tabularnewline
        & (146235)R & \tabularnewline \cline{2-2}
        & (13524)L & \tabularnewline
        & (14625)R & \tabularnewline
        & (24635)L & \tabularnewline
        \hline
        \end{tabular}
    \par\vspace{0pt}
  \end{minipage}
\vspace{1 em}

  \noindent $K_6:8s2$ 
\vspace{1 em}

\begin{minipage}[b]{0.35\linewidth}
\begin{tabular}{|l|l|}
        \hline
        Sheet Number & Edges\tabularnewline
        \hline
        \hline
        Sheet 1 & 13 \tabularnewline
        \hline
        Sheet 2 & 24 \tabularnewline
        \hline
        Sheet 3 & 35, 36 \tabularnewline
        \hline
        Sheet 4 & 46 \tabularnewline
        \hline
        Sheet 5 & 15 \tabularnewline
        \hline
        Sheet 6 & 26 \tabularnewline
        \hline
        Sheet 7 & 14 \tabularnewline
        \hline
        Sheet 8 & 25 \tabularnewline
        \hline
\end{tabular}
    \par\vspace{0pt}
  \end{minipage}
  \begin{minipage}[b]{0.60\linewidth}
    \begin{tabular}{|l|l|l|}
        \hline
        3 Hopf links & 1 Solomon & 7 trefoils \tabularnewline
        \hline
        (134)(256) & (135)(246) & (135246)L \tabularnewline
        (136)(245) & & (136245)L \tabularnewline
        (145)(236) & & (136524)L \tabularnewline
        & & (143625)L \tabularnewline
        & & (146235)L \tabularnewline \cline{3-3}
        & & (13524)L \tabularnewline
        & & (14625)L \tabularnewline
        \hline
        \end{tabular}
    \par\vspace{0pt}
  \end{minipage}
\vspace{1 em}

  \noindent $K_6:8s3$ 
\vspace{1 em}

\begin{minipage}[b]{0.35\linewidth}
\begin{tabular}{|l|l|}
        \hline
        Sheet Number & Edges\tabularnewline
        \hline
        \hline
        Sheet 1 & 13, 36 \tabularnewline
        \hline
        Sheet 2 & 24 \tabularnewline
        \hline
        Sheet 3 & 35 \tabularnewline
        \hline
        Sheet 4 & 46 \tabularnewline
        \hline
        Sheet 5 & 15 \tabularnewline
        \hline
        Sheet 6 & 26 \tabularnewline
        \hline
        Sheet 7 & 14\tabularnewline
        \hline
         Sheet 8 & 25 \tabularnewline
        \hline
\end{tabular}
    \par\vspace{0pt}
  \end{minipage}
  \begin{minipage}[b]{0.60\linewidth}
    \begin{tabular}{|l|l|l|}
        \hline
        3 Hopf links & 1 Solomon & 9 trefoils \tabularnewline
        \hline
        (124)(356) & (135)(246) & (124635)L \tabularnewline
        (134)(256) & & (135246)L \tabularnewline
        (145)(236) & & (142536)L \tabularnewline
        & & (142635)R \tabularnewline
        & & (143625)L \tabularnewline
        & & (146235)L \tabularnewline \cline{3-3}
        & & (12536)L \tabularnewline
        & & (13524)L \tabularnewline
        & & (14625)L \tabularnewline
        \hline
        \end{tabular}
    \par\vspace{0pt}
  \end{minipage}
\vspace{1 em}

\noindent $K_6:8s4$ 
\vspace{1 em}

\begin{minipage}[b]{0.35\linewidth}
\begin{tabular}{|l|l|}
        \hline
        Sheet Number & Edges\tabularnewline
        \hline
        \hline
        Sheet 1 & 13 \tabularnewline
        \hline
        Sheet 2 & 24, 26 \tabularnewline
        \hline
        Sheet 3 & 35 \tabularnewline
        \hline
        Sheet 4 & 46 \tabularnewline
        \hline
        Sheet 5 & 15 \tabularnewline
        \hline
        Sheet 6 & 36 \tabularnewline
        \hline
        Sheet 7 & 14\tabularnewline
        \hline
         Sheet 8 & 25 \tabularnewline
        \hline
\end{tabular}
    \par\vspace{0pt}
  \end{minipage}
  \begin{minipage}[b]{0.60\linewidth}
    \begin{tabular}{|l|l|}
        \hline
        5 Hopf links & 8 trefoils \tabularnewline
        \hline
        (125)(346) & (124635)L  \tabularnewline
        (134)(256) & (135246)L \tabularnewline
        (135)(246) & (136425)R \tabularnewline
        (136)(245) & (136524)L \tabularnewline
        (145)(236) & (146325)L \tabularnewline \cline{2-2}
        &  (13524)L \tabularnewline
        &  (13625)R \tabularnewline
        &  (14635)L \tabularnewline
        \hline
        \end{tabular}
    \par\vspace{0pt}
  \end{minipage}
\vspace{1 em}

\noindent $K_6:8s5$ 
\vspace{1 em}

\begin{minipage}[b]{0.35\linewidth}
\begin{tabular}{|l|l|}
        \hline
        Sheet Number & Edges\tabularnewline
        \hline
        \hline
        Sheet 1 & 13 \tabularnewline
        \hline
        Sheet 2 & 24 \tabularnewline
        \hline
        Sheet 3 & 35 \tabularnewline
        \hline
        Sheet 4 & 14, 46 \tabularnewline
        \hline
        Sheet 5 & 26 \tabularnewline
        \hline
        Sheet 6 & 15 \tabularnewline
        \hline
        Sheet 7 & 36\tabularnewline
        \hline
         Sheet 8 & 25 \tabularnewline
        \hline
\end{tabular}
    \par\vspace{0pt}
  \end{minipage}
  \begin{minipage}[b]{0.60\linewidth}
    \begin{tabular}{|l|l|l|}
        \hline
        5 Hopf links & 7 trefoils & 1 figure-eight \tabularnewline
        \hline
        (124)(356) & (124635)L & (142635)  \tabularnewline
        (125)(346) & (135246)L & \tabularnewline
        (135)(246) & (135624)R & \tabularnewline
        (136)(245) & (136425)R & \tabularnewline
        (145)(236) & (143625)R & \tabularnewline \cline{2-2}
        &  (13524)L & \tabularnewline
        &  (13625)R & \tabularnewline
        \hline
        \end{tabular}
    \par\vspace{0pt}
  \end{minipage}
\vspace{1 em}

  \noindent $K_6:8s6$ 
\vspace{1 em}

\begin{minipage}[b]{0.35\linewidth}
\begin{tabular}{|l|l|}
        \hline
        Sheet Number & Edges\tabularnewline
        \hline
        \hline
        Sheet 1 & 13 \tabularnewline
        \hline
        Sheet 2 & 24, 26 \tabularnewline
        \hline
        Sheet 3 & 35 \tabularnewline
        \hline
        Sheet 4 & 14 \tabularnewline
        \hline
        Sheet 5 & 36 \tabularnewline
        \hline
        Sheet 6 & 15 \tabularnewline
        \hline
        Sheet 7 & 46\tabularnewline
        \hline
         Sheet 8 & 25 \tabularnewline
        \hline
\end{tabular}
    \par\vspace{0pt}
  \end{minipage}
  \begin{minipage}[b]{0.60\linewidth}
    \begin{tabular}{|l|l|l|}
        \hline
        5 Hopf links & 7 trefoils & 1 figure-eight \tabularnewline
        \hline
        (124)(356) & (134625)R & (136425)  \tabularnewline
        (125)(346) & (135246)L & \tabularnewline
        (134)(256) & (136254)R &\tabularnewline
        (136)(245) & (142635)R &\tabularnewline
        (145)(236) & (146325)R &\tabularnewline \cline{2-2}
        &  (13524)L &\tabularnewline
        &  (14635)R &\tabularnewline
        \hline
        \end{tabular}
    \par\vspace{0pt}
  \end{minipage}

\subsection*{9 sheets}  

\noindent $K_6:9s1$ 
\vspace{1 em}

\begin{minipage}[b]{0.35\linewidth}
\begin{tabular}{|l|l|}
    \hline
    Sheet Number &  Edges \tabularnewline
    \hline
    Sheet 1 & 13   \tabularnewline
    \hline
    Sheet 2 & 24   \tabularnewline
    \hline
    Sheet 3 & 35  \tabularnewline
    \hline
    Sheet 4 & 46   \tabularnewline
    \hline
    Sheet 5 & 15  \tabularnewline
    \hline
    Sheet 6 & 26 \tabularnewline
    \hline
    Sheet 7 & 14  \tabularnewline
    \hline
    Sheet 8 & 36   \tabularnewline
    \hline
    Sheet 9 & 25   \tabularnewline
    \hline
\end{tabular}
    \par\vspace{0pt}
  \end{minipage}
  \begin{minipage}[b]{0.60\linewidth}
    \begin{tabular}{|l|l|l|}
        \hline
        5 Hopf links & 1 Solomon & 8 trefoils \tabularnewline
        \hline
        (124)(356) & (135)(246) & (124635)L   \tabularnewline
        (125)(346) & & (135246)L \tabularnewline
        (134)(256) & & (136254)R \tabularnewline
        (136)(245) & & (136425)R \tabularnewline
        (145)(236) & & (142635)R \tabularnewline
        & &  (146235)L \tabularnewline \cline{3-3}
        & & (13524)L \tabularnewline
        & & (14625)L \tabularnewline
        \hline
        \end{tabular}
    \par\vspace{0pt}
  \end{minipage}
\vspace{1 em}

\noindent $K_6:9s2$  
\vspace{1 em}

\begin{minipage}[b]{0.35\linewidth}
\begin{tabular}{|l|l|}
    \hline
    Sheet Number &  Edges \tabularnewline
    \hline
    Sheet 1 & 13  \tabularnewline
    \hline
    Sheet 2 & 24  \tabularnewline
    \hline
    Sheet 3 & 36 \tabularnewline
    \hline
    Sheet 4 & 15  \tabularnewline
    \hline
    Sheet 5 & 26 \tabularnewline
    \hline
    Sheet 6 & 14 \tabularnewline
    \hline
    Sheet 7 & 35  \tabularnewline
    \hline
    Sheet 8 & 46  \tabularnewline
    \hline
    Sheet 9 & 25  \tabularnewline
    \hline
\end{tabular}
    \par\vspace{0pt}
  \end{minipage}
  \begin{minipage}[b]{0.60\linewidth}
    \begin{tabular}{|l|l|l|}
        \hline
        7 Hopf links & 6 trefoils & 3 figure-eights \tabularnewline
        \hline
        (124)(356)& (125364)L & (135264) \tabularnewline
        (125)(346)& (135246)L & (136425) \tabularnewline
        (134)(256)& (136245)L & (142635) \tabularnewline
        (135)(246)& (136524)L &  \tabularnewline
        (136)(245)& (143625)L &\tabularnewline
        (145)(236)& (146235)L &\tabularnewline
        (146)(235)& &\tabularnewline
        \hline
        \end{tabular}
    \par\vspace{0pt}
  \end{minipage}

\section{Acknowledgements}
The author would like to acknowledge Merrimack College graduates Amanda DeMarco, Stephen Francis, Joseph Mantoni, Andrew O'Hara who studied book representations during spring 2013 as a part of the course MTH4850: Directed Research in Mathematics at Merrimack College.  Their work contributed to identifying 3-sheet, 4-sheet and 9-sheet book representations of $K_6$.


\end{document}